\def\XXint#1#2#3{{\setbox0=\hbox{$#1{#2#3}{\int}$ }
		\vcenter{\hbox{$#2#3$ }}\kern-.6\wd0}}
\newtheorem{theorem}{Theorem}[section]
\newtheorem{lemma}[theorem]{Lemma}
\newtheorem{proposition}[theorem]{Proposition}%[section]
\theoremstyle{definition}
\newtheorem{definition}[theorem]{Definition}
\theoremstyle{remark}
\newtheorem{remark}[theorem]{Remark}
\numberwithin{equation}{section}
\newcommand{\beq}{\begin{equation}}
	\newcommand{\eeq}{\end{equation}}
\newcommand{\ben}{\begin{eqnarray}}
	\newcommand{\een}{\end{eqnarray}}
\newcommand{\beno}{\begin{eqnarray*}}
	\newcommand{\eeno}{\end{eqnarray*}}
\newtheorem{mythm}{Theorem}
\numberwithin{equation}{section}
\subjclass[]{}
\keywords{}
\begin{document}
	
	\title{The free boundary for the singular obstacle problem with logarithmic forcing term}
	\author{LILI DU$^{1,2}$}
	\author{YI ZHOU$^2$}
	\thanks{* This work is supported by National Nature Science Foundation of China Grant 12125102.}
	\subjclass[2020]{35R35, 35J61}
	\thanks{$^1$ E-mail: dulili@szu.edu.cn \quad $^2$ E-mail: zhouyimath@163.com}
	
	\maketitle
	
	\begin{center}
		$^1$School of Mathematical Sciences, Shenzhen University,
		
		Shenzhen 518061, P. R. China.
	\end{center}
	
	\begin{center}
		$^2$Department of Mathematics, Sichuan University,
		
		Chengdu 610064, P. R. China.
	\end{center}
	
	\begin{abstract}
		
		In the previous work [Interfaces Free Bound., 19, 351-369, 2017], de Queiroz and Shahgholian investigated the regularity of the solution to the obstacle problem with singular logarithmic forcing term
		\begin{equation*}
			-\Delta u = \log u \, \chi_{\{u>0\}} \quad \text{in} \quad \Omega,
		\end{equation*}
		where $\chi_{\{u>0\}}$ denotes the characteristic function of the set $\{u>0\}$ and $\Omega \subset \mathbb{R}^n$ ($n \geq 2$) is a smooth bounded domain. The solution solves the minimum problem for the following functional,
		\begin{equation*}
			\mathscr{J}(u):=\int_{\Omega}\left(\frac{|\nabla u|^2}{2}-u^+ (\log u-1)\right) \, dx,
		\end{equation*}
		where $u^+=\max{\{0,u\}}$.  In this paper, based on the regularity of the solution, we establish the $C^{1,\alpha}$ regularity of the free boundary $\Omega \cap \partial\{u>0\}$ near the regular points for some $\alpha\in (0,1)$.
		
		The logarithmic forcing term becomes singular near the free boundary $\Omega\cap\partial\{u>0\}$ and lacks the scaling properties, which are  very crucial in studying the regularity of the free boundary. Despite these challenges, we draw inspiration for our overall strategy from the "epiperimetric inequality" method introduced by Weiss in 1999 [Invent. Math., 138, 23-50, 1999]. Central to our approach is the introduction of a new type of energy contraction. This allows us to achieve energy decay, which in turn ensures the uniqueness of the blow-up limit, and subsequently leads to the regularity of the free boundary.

		\noindent{Keyword: } Free boundary; Obstacle problem; Logarithmic singularity; Regularity.
	\end{abstract}
	
	\section{Introduction}

	In the previous paper \cite{qs17}, de Queiroz and Shahgholian studied  the regularity of the solution to the obstacle problem with singular logarithmic forcing term
	\begin{equation}\label{eq1.1}
	-\Delta u = \log u \, \chi_{\{u>0\}} \quad \text{in} \quad \Omega,
\end{equation}
	where $\chi_{\{u>0\}}$ denotes the characteristic function of the set $\{u>0\}$ and $\Omega \subset \mathbb{R}^n$ ($n \geq 2$) is a smooth bounded domain. The solution solves the minimum problem for the following functional,
	\begin{equation}\label{eq1}
		\mathscr{J}(u):=\int_{\Omega}\left(\frac{|\nabla u|^2}{2}-u^+ (\log u-1)\right) \, dx,
	\end{equation}
	where $u^+=\max{\{0,u\}}$. Minimization happens in the class,
	$$\mathcal{K}_{\varphi}:=\Big\{u\in W^{1,2}(\Omega);\  u=\varphi \quad\text{on}\quad\partial \Omega\Big\},$$ 
	for a fixed non-negative $\varphi\in H^1(\Omega)\cap L^{\infty}(\Omega)$.  We denote the free boundary $\mathscr{F}(u):=\Omega\cap\partial\{u>0\}$.
	
 The motivation for studying problems involving singular nonlinearities has roots in many applications. Examples can be found in physics and chemical phenomena, as discussed in \cite{a75, d85}. However, it is worth noting that the logarithmic forcing term in equation \eqref{eq1.1} becomes singular near the free boundary $\mathscr{F}(u)$.
 
  In particular, we remark that the observation in the context of the singular obstacle problem \cite{qs17} which states (see \cite[Page 352]{qs17}):
	\begin{align*}
		``&\textit{ ...clearly this is not sharp and, hence, not sufficient to study}\\
		&\textit{fine analytic and geometric properties of the free boundary...}"
	\end{align*}
	This is the one of motivations to investigate the regularity of this problem.
	
	Consider the classical obstacle problem
	\begin{align}\label{eqc}
			\Delta u=\chi_{\{{u>0}\}} \qquad\text{in} \quad \Omega,
	\end{align}
it is clear that $\Delta u$ will exhibit a jump at the free boundary $\Omega\cap\partial\{u>0\}$ from that
\begin{align*}
			\text{ the forcing term }=\begin{split}
		\left\{
		\begin{array}{lr}
			0               & \quad\  \text{on}\quad \partial\{u>0\},\\
			1                  &\text{in}\   \  \quad \{u>0\},
		\end{array}
		\right.
	\end{split}
\end{align*}
a finer result on
the regularity of the solutions of \eqref{eqc}  ($C_{\text{loc}}^{1,1}(\Omega)$) was obtained Frehse \cite{f72}. Particularly, if $	\Delta u =f(x)\chi_{\{{u>0}\}}$ and the forcing term $f(x)$ to have a regular potential $\psi(x) \in C^{1,1}(\Omega)$, namely, $
	f(x)=\Delta \psi (x) \qquad\text{in}\quad \Omega$,  then we also can obtain the optimal regularity $u\in C^{1,1}_{\text{loc}}(\Omega)$ (see \cite[Theorem 2.3]{psu}).
Similar cases for the problem
\begin{align*}
	\Delta u =f(u)\chi_{\{{u>0}\}},
\end{align*}
as in (\cite{w99,asu,dz}), the non-degenerate forcing term $f(u)$ was assumed to satisfy $0<|f(0)|\leq C$ (with $C$ being a constant), which can yield the $C^{1,\alpha}$ ($\alpha\in(0,1)$) regularity of solutions.

On another hand, for the obstacle problem with degenerate forcing term,
\begin{align*}
	\Delta u= u^q\chi_{\{{u>0}\}}\qquad\text{in}\quad \Omega, \qquad\text{for}\quad  q\in(0,1),
\end{align*}
  it gives that $\Delta u$ will not jump on the free boundary $\mathscr{F}(u)$, and the regularity of the solution can reach $C^{[\kappa],\kappa-[\kappa]}_{\text{loc}}(\Omega)$, $\kappa=\frac{2}{1-q}>2$ and $[\kappa]:=\max \{n\in\mathbb{Z};n\leq \kappa\}$ 
    (see  \cite{fsw21} or \cite{f83}).

However, in our case, noting that on the right-hand side of equation \eqref{eq1.1} the logarithmic function is highly singular near 0 ($\log u\to -\infty$ as $u\to 0+$), which implies that a singularity of $\Delta u$ may occur near the free boundary $\mathscr{F}(u)$ and this singularity gives that demonstrating the optimal regularity of solutions becomes a quite delicate problem. de Queiroz and Shahgholian in \cite{qs17} employed very elegant variational approach, derived that any minimizer $u$ of $\mathscr{J}(u)$ \eqref{eq1} belongs to the class $C^{1,\alpha}_{\text{loc}}(\Omega)$, for any $\alpha \in (0,1)$  through growth estimates \cite[Lemma 3.10, Theorem 3.11]{qs17} and non-degeneracy \cite[Lemma 3.14]{qs17} (see Theorem \ref{lem1.1.}). In fact, the optimal regularity of minimizer is $C^{1,\log }_{\text{loc}}(\Omega)$ near the free boundary, namely, 
$$|\nabla u (x)|\leq C d(x)\log \frac{1}{d(x)},\quad\text{for}\quad x\in \Omega'\subset\subset\Omega,$$
where $d(x)=\text{dist} (x,\partial\{u>0\}),\  \text{and constant}\  C>0$.
The analysis involved an approximating variational procedure, wherein the singularity was eliminated, transforming the problem into a regular equation. Moreover, Montenegro and de Queiroz, in \cite{md09}, considered the general case for the singular right-hand side of the equation with homogeneous Dirichlet boundary conditions, in which the existence and regularity properties of a maximal solution (i.e. if $u\geq v$ for any other solution $v$) is obtained.

  Indeed, the importance of the regularity theory for minimizers of free boundary variational problems is well-known at least since the seminal works of Alt--Caffarelli \cite{ac85} for homogeneous problem and Alt--Phillips \cite{ap86} for non-homogeneous problem. It is noteworthy that the regularity of the solution in the classical obstacle problem for free boundary is $C^{1,1}_{\text{loc}}(\Omega)$ (the specific proof details can be found in reference \cite{f18}).	 Fortunately, de Queiroz and Shahgholian in \cite{qs17} showed the following celebrated results on the growth estimate and non-degeneracy of minimizer of the singular problem \eqref{eq1.1}.
	
\renewcommand{\themythm}{\Alph{mythm}}
\begin{mythm}
(The growth estimate and non-degeneracy of minimizer in  \cite{qs17})\label{lem1.1.}
 		Let $u$ be a minimizer of $\mathscr{J}(u)$ in $\mathcal{K}$ and $x^0\in \Omega\cap \partial \{u>0\}$, then we have the inequality 
 		\begin{equation*}
 			C_1\leq \displaystyle\sup_{B_r (x^0)} \frac{u(x^0+rx)}{ r^2|\log r|}\leq C_2,
 		\end{equation*}
 		for some constants $C_1, C_2>0$, provided $B_r(x^0)\subset\subset \Omega$ and $0<r\leq r_0$, for some $r_0<1$ depending only on $\Omega$, $\varphi$ and $n$.
	\end{mythm}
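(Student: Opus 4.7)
The statement contains two estimates of opposite flavor, an upper quadratic-logarithmic growth and a matching lower non-degeneracy at any free boundary point, and I would prove them separately, both anchored to the explicit profile $u(x)\approx A|x-x^0|^2|\log|x-x^0||$. The radial computation
\begin{equation*}
-\Delta\bigl(A|x|^2|\log|x||\bigr)=2An|\log|x||+O(1),
\end{equation*}
combined with $\log u\approx 2\log|x-x^0|$, shows that both sides of the equation $-\Delta u=\log u$ are of size $|\log r|$ and singles out $A=1/n$ as the critical balancing constant. This profile will appear as barrier in the upper bound and as the shift in the variational competitor for the lower bound.

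For the upper estimate I would take as barrier $W(x)=A|x-x^0|^2|\log|x-x^0||$ with $A>1/n$, defined on a small ball $B_{r_0}(x^0)$. On $\{W>0\}$ the computation above, together with $\log W\le 2\log|x-x^0|+\log A+\log|\log|x-x^0||$, yields $-\Delta W\ge\log W$ once $r_0$ is taken small enough. Because $u$ and $|\nabla u|$ vanish on $\Omega\cap\partial\{u>0\}$ by the $C^{1,\alpha}$ regularity established in \cite{qs17}, and because $W\ge u$ on $\partial B_{r_0}(x^0)$ for $A$ large (uniformly for $x^0$ in a compact subset of $\Omega$), the comparison principle on the open set $\{u>0\}\cap B_{r_0}(x^0)$ gives $u\le W$ there, which is the required upper bound $\sup_{B_r(x^0)}u\le C_2 r^2|\log r|$ for $r\le r_0$.

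For the non-degeneracy I would exploit the fact that $u$ minimizes $\mathscr{J}$ in $\mathcal{K}_\varphi$. Given $x^0\in\Omega\cap\partial\{u>0\}$, I would compare $u$ with a competitor of the form $v=(u-c\,r^2|\log r|\,\eta_r)^+$, where $\eta_r$ is a smooth cutoff equal to one on $B_{r/2}(x^0)$ and supported in $B_r(x^0)$. The Dirichlet part of $\mathscr{J}(v)-\mathscr{J}(u)$ contributes a gain of order $c\,r^n|\log r|\,\sup_{B_r(x^0)}u$ plus a fixed drift of order $c^2 r^{n+2}|\log r|^2$, while the change in the potential $\int u^+(\log u-1)\,dx$ is controlled via the cancellation $u^+(\log u-1)\to 0$ as $u\to 0^+$. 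For $c$ small and $\sup_{B_r(x^0)}u\ll r^2|\log r|$ the gain dominates, so $\mathscr{J}(v)<\mathscr{J}(u)$, contradicting minimality and producing the lower bound $\sup_{B_r(x^0)}u\ge C_1 r^2|\log r|$.

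The main obstacle is that the logarithmic forcing is not scale-invariant: under $x\mapsto rx$ one formally obtains
\begin{equation*}
-\Delta v_r=\frac{\log v_r}{|\log r|}-2+\frac{\log|\log r|}{|\log r|},\quad v_r(x):=\frac{u(x^0+rx)}{r^2|\log r|},
\end{equation*}
so the rescaled limit is the classical obstacle equation $\Delta v_\infty=2\chi_{\{v_\infty>0\}}$, but with lower order corrections of size $\log|\log r|/|\log r|$ that have to be tracked carefully both in the barrier construction and in the verification that $\log W \le -\Delta W$. A secondary difficulty is that the classical Caffarelli non-degeneracy relies on a forcing bounded below away from zero near the free boundary; here $\log u\to-\infty$ as $u\to 0^+$, and it is precisely the cancellation $u^+(\log u-1)\to 0$ that salvages the variational argument, so the choice of competitor and the estimate of the energy difference must be tailored to exploit this cancellation rather than any pointwise lower bound on the forcing.
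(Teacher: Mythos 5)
The paper never proves this statement: it is Theorem A, imported verbatim from de Queiroz--Shahgholian \cite{qs17} (the growth estimate is their Lemma 3.10 and Theorem 3.11, the non-degeneracy their Lemma 3.14), so there is no internal proof to compare against and your argument has to stand on its own. As written, both halves contain genuine gaps.

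For the upper bound, your radial computation has a sign error that invalidates the barrier. Writing $\rho=|x-x^0|$ and $W=A\rho^2|\log\rho|=-A\rho^2\log\rho$ for $\rho<1$, one has $\Delta(\rho^2\log\rho)=2n\log\rho+n+2$, hence $-\Delta W=-2An|\log\rho|+A(n+2)$, which is large and \emph{negative} near $\rho=0$, not $2An|\log\rho|+O(1)$ as in your displayed formula. Since $\log W=-2|\log\rho|+\log A+\log|\log\rho|$, the supersolution inequality $-\Delta W\ge\log W$ is equivalent to $2(1-An)|\log\rho|\ge \log A+\log|\log\rho|-A(n+2)$, which for small $\rho$ holds precisely when $A<1/n$ and \emph{fails} for every $A>1/n$. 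Your construction needs $A$ large so that $W\ge u$ on $\partial B_{r_0}(x^0)$, and for such $A$ the function $W$ is not a supersolution exactly where it matters, near $x^0$. Even for $A<1/n$ the argument cannot be completed, because the comparison principle you invoke is not justified for $-\Delta u=\log u$: the nonlinearity $\log u$ is \emph{increasing} in $u$ (standard comparison for $-\Delta u=f(u)$ requires $f$ non-increasing), its derivative $1/u$ blows up at the free boundary, and the potential $F(u)=u(1-\log u)$ is concave, so neither maximum-principle nor convexity-based energy comparison applies. This failure of comparison is precisely why \cite{qs17} obtains the growth estimate through approximation and compactness (dyadic rescaling) arguments rather than through barriers.

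For the non-degeneracy, the mechanism you describe does not produce a contradiction. For $v=(u-s\eta_r)^+$ with $s=cr^2|\log r|$, the Dirichlet cross term $-s\int_{\{u>s\eta_r\}}\nabla u\cdot\nabla\eta_r\,dx$ equals, to leading order after integration by parts, $+s\int\eta_r\,\Delta u\,dx=s\int\eta_r|\log u|\chi_{\{u>0\}}\,dx\ge0$: it is a \emph{cost}, not a gain, and it is cancelled to leading order by the potential gain $\int\bigl(F(u)-F(v)\bigr)dx\approx\int|\log u|\,(u-v)\,dx$. Under the contradiction hypothesis $\sup_{B_r(x^0)}u\ll r^2|\log r|$ all remaining terms are of the same small order, so minimality of $u$ is not violated and no contradiction follows. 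You also misdiagnose the difficulty: the forcing $-\log u=|\log u|$ does not degenerate near the free boundary, it \emph{blows up}, which makes Caffarelli's maximum-principle argument work especially well once the upper bound is known. That is essentially the argument of \cite{qs17}: the growth estimate gives $\Delta u=|\log u|\ge c\,|\log r|$ on $\{u>0\}\cap B_r(x^0)$ for small $r$, so for $x^1\in\{u>0\}$ near $x^0$ the function $w=u-u(x^1)-\frac{c|\log r|}{2n}|x-x^1|^2$ is subharmonic on the positivity set, vanishes at $x^1$, and is negative on the free boundary, whence $\sup_{\partial B_r(x^1)}u\ge u(x^1)+\frac{c}{2n}r^2|\log r|$; letting $x^1\to x^0$ gives the lower bound. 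Note the logical order this imposes: non-degeneracy at the rate $r^2|\log r|$ \emph{uses} the upper bound as input, whereas your proposal treats the two estimates as independent.
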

	
	Then the above fact have been utilized to demonstrate that every solution near each free boundary point exhibits supercharacteristic growth
	$$r^2|\log r|,$$
	adding valuable insights to study the regularity of the free boundary $\mathscr{F}(u)$. Furthermore, we understand that such a blow-up limit solution $\left(\displaystyle\frac{u(x^0+rx)}{2r^2|\log r|}\right)$ to the problem \eqref{eq1.1}, converges to the global solution $u_0$ of the classical obstacle problem,
	\begin{align*}
		\Delta u_0=\chi_{\{{u_0>0}\}} \qquad\text{in} \quad \mathbb{R}^n.
	\end{align*}

However, to investigate the free boundary, the problem of "uniqueness of blow-up limit" is a central one in the geometric analysis and free boundary problems (see \cite{asu,w99}), addressing it can lead to energy decay results and subsequently establish the regularity of the free boundary.

	Another noteworthy point is that the minimizers of $\mathscr{J}(u)$ \eqref{eq1} are always non-negative  has been shown in \cite[Lemma 2.1]{qs17}. And then, the non-negativity of minimizers indicates that the minimum problem \eqref{eq1} is equivalent to the following minimum problem (details can be found in \cite[Theorem 1.4]{psu}),
	
	 \begin{equation}\label{eq1.0}
		\mathscr{J}_0(u;\Omega):=\int_{\Omega}\left(\frac{|\nabla u|^2}{2}+u( -\log u+1)\right) dx,
	\end{equation}
	over $$\mathcal{K}_0:=\{u\in W^{1,2}(\Omega); u-\varphi\in W_0^{1,2}(\Omega),\  u\geq 0 \quad\text{in}\quad \Omega\}.$$
	To the end of this paper, for convention, we take 
	\begin{align*}
			F(u)=u( -\log u+1),\quad\text{and}\quad	f(u):=F'(u)=-\log u.
	\end{align*}
	Next, we will introduce the related results on the obstacle problem.
	
\subsection{Related obstacle problems}\

Let us examine the classical obstacle problem,
	\begin{equation}\label{classical}
		\Delta u=\chi _{\{u>0\}}\quad \text{in}\quad \Omega.
	\end{equation}
Caffarelli's dichotomy theorem, as outlined in the celebrated work \cite{c77}, established the mutual exclusion of regular and singular points, specifying that the classification of free boundary points of \eqref{classical} is divided to either regular or singular points, with no overlap between the two categories. 

Moreover, in the classical obstacle problem, Caffarelli's result \cite{c98} relies on the so-called improved flatness, based on the boundary Harnack principle (Athanasopoulos--Caffarelli \cite{ac85}), to bootstrap the Lipschitz regularity of the free boundary near the regular points to $C^{1,\beta}$ regularity,
where \begin{align*}
\mathcal{R}_u:=	\{\text{regular points}\}:=\left\{x^0\in \mathscr{F}(u): \text{any blow-up limit} \lim_{r\to 0+}\frac{u(x^0+rx)}{r^2}=\frac{1}{2}\max (x\cdot \nu, 0)^2\right\},
\end{align*}
for some unit vector $\nu\in\mathbb{R}^n$.
 Additionally, a characterization of the singular points was carried out according to their dimensions (refer to reference \cite{c98,f18}), where 
 \begin{align*}
 	\Sigma_u:=\{ \text{singular points}\}:=\left\{x^0\in \mathscr{F}(u): \text{any blow-up limit} \lim_{r\to 0+}\frac{u(x^0+rx)}{r^2}=\frac{1}{2}\langle Ax,x\rangle \right\},
 \end{align*}
 for some nonnegative definite matrix $A\in\mathbb{R}^{n\times n}$ with $\text{tr}(A)=1$. Furthermore, Figalli and Serra provided a more detailed characterization of the set of singular points (for further details, refer to reference \cite{f18,fs19}).

 To tackle more complex models, often involving time-dependent equations \cite{w00} or systems of equations \cite{asu}, Weiss introduced a groundbreaking method known as the "epiperimetric inequality", which works at regular points and provides an alternative to the methods previously introduced by Caffarelli \cite{c77}. The epiperimetric inequality was first introduced by Reifenberg \cite{r}, White \cite{wh}, and Taylor \cite{ta} in the context of minimal surfaces. Weiss, inspired by the findings of Fleming \cite{f62}, particularly regarding the challenge of determining an oriented minimal surface with a predefined oriented boundary, innovatively applied this approach to demonstrate the regularity of the free boundary for the classical obstacle problem \cite{w99}. The epiperimetric inequality is purely variational approach to the regularity of the free boundary and the minimal surfaces, which gives an estimate on the rate of the energy to its blow-up limit. As a consequence, in \cite{w99}, an epiperimetric inequality for the Weiss adjusted boundary energy related to the classical obstacle problem was proved. This result was used to obtain $C^{1,\beta}$ regularity near the regular points of the free boundary. Furthermore, 
 the methodology was expanded to include additional cases \cite{csv,esv}, with the exact statements as follows.

For the vectorial obstacle problem, the system is elegantly stated as
\begin{equation}\label{ve}
	\Delta \mathbf{u}=\frac{\mathbf{u}}{|\mathbf{u}|}\chi_{\{\mathbf{u}>0\}} \qquad \text{in }\quad \Omega,
\end{equation}
where $\mathbf{u}:=(u_1,u_2,...,u_m)$ represents the vector-valued solution, with $m\geq 2$ components. Notably,  although studies have shown that the regularity of the solution is confined to $C^{1,\alpha}_{\text{loc}}(\Omega,\mathbb{R}^m)$ for some $\alpha\in (0,1)$, an epiperimetric inequality for the vectorial Weiss adjusted boundary energy has been derived in the prestigious work \cite{asu}. This inequality allowed us to quantify the convergence of energy as $r\to 0+$ to be of H\"{o}lder type and provided an alternative proof of the  $C^{1,\beta}$-regularity of free boundary. Very recently, the authors investigated the general vectorial non-degenerate obstacle problem in \cite{dz} and obtained the $C^{1,\beta}$-regularity of free boundary via the method of epiperimetric inequality.

On the other hand, obstacle problem with homogeneous term, 
$$\Delta u = f(u)\chi_{\{{u>0}\}}:=u^q\chi_{\{{u>0}\}},\quad q\in(0,1), \qquad\left(\text{for degenerate case}\quad f(0)=0\right),$$ has been firstly investigated by Weiss in 2000 \cite{w00} and the method of epiperimetric inequality in the original work \cite{w99} also employed  to demonstrate that this free boundary possesses $C^{1,\beta}$ regularity. Furthermore,  this degenerate case has been extended to the vectorial case in \cite{afsw22}.

This paper is a sequel to the previous elegant work by de Queiroz--Shahgholian \cite{qs17}. The purpose of this paper is to investigate the regularity of the free boundary in the non-homogeneous obstacle problem with a log-type singular term. Notably, the logarithmic term is non-homogeneous and unbounded near the free boundary, which causes many difficulties in proving the regularity of the free boundary. For instance, we have the scaling order $2r^2|\log r|$, this is not a polynomial order; the polynomial's degree is multiplied by a logarithmic term, introducing significant difficulties to our calculations. Due to the lack of scaling property for Weiss adjusted boundary energy $W(r;u,x^0)$ (see Definition \ref{weissenergy}), namely,
\begin{align*}
	W(r;u,x^0)\neq W(0+;u_r,0), \qquad\text{for}\quad u_r=\frac{u(x^0+rx)}{2r^2|\log r|},
\end{align*}
 many additional terms are encountered when differentiating a Weiss adjusted boundary energy (in subsection we will delve into the details of this energy). In fact, de Queiroz and Shahgholian have proposed in the abstract of \cite{qs17} that:
 \begin{align*}
 	``&\textit{..., the logarithmic forcing term does not have scaling properties,}\\
 	&\textit{ which are very important in the study of free boundary theory...}"
 \end{align*}

 Moreover, it's hard to show the uniqueness of blow-up limit according to the epiperimetric inequality of the standard Weiss adjusted boundary energy in this situation. However, to overcome this difficulties, we firstly introduce a new Weiss adjusted boundary energy with the variable parameter $\alpha(r)$ (please see \eqref{alp}) such that 
 \begin{align}\label{k}
 	W(r;u,x^0)-Q(r;u,x^0)= K(r;u,x^0)\geq 0, 
 \end{align}
where $Q(r;u,x^0)$ is integrable related to $r$, this also gives $W(0+;u,x^0)\geq0$.
 Secondly, we will show the contraction (epiperimetric inequality) of a new energy with an additional good term (see \eqref{Q}) (integrable in $(0,r)$).  Fortunately, the additional term in the new energy is good in the sense that it converges to zero as $r\to 0+$ (please see \eqref{epip} and \eqref{Q}). This helps us achieve the energy decay then uniqueness of blow-up limit can be obtained, therefore we can yield the regularity of the free boundary as in the pioneering work \cite{w99}.

In summary, we can conclude the results we mentioned above in the table below.

\begin{table}[h]
	\caption{Conclusions for obstacle problems}\label{table1}
	\centering
	\begin{tabular}{|m{4em}<{\centering}|m{10.5em}<{\centering}|m{12em}<{\centering}|m{10.5em}<{\centering}|m{2.5em}<{\centering}|m{16em}<{\centering}|m{2.5em}<{\centering}|}
		\hline
		Problems&Classical obstacle problem (non-degenerate case)
		&Homogeneous obstacle problem (degenerate case)&Non-homogeneous obstacle problem (singular case)\\
		\hline
		Euler-Lagrange equation&  $\Delta u=\chi_{\{{u>0}\}}$& $\Delta u=u^q\chi_{\{{u>0}\}} $, $(q\in(0,1))$& $\Delta u=-\log u \chi_{\{{u>0}\}}$ \\
		\hline
		Regularity of solu. & $C^{1,1}_{\text{loc}}(\Omega)$ \cite{f72}& $C^{[\kappa],\kappa-[\kappa]}_{\text{loc}}(\Omega),$($\kappa=\frac{2}{1-q}>2$) \cite{f83}& $C^{1,\log } _{\text{loc}}(\Omega)$  \cite{qs17}\\
		\hline
		Scaling sequence& $\displaystyle\frac{u(x^0+rx)}{r^2}$ & $\displaystyle\frac{u(x^0+rx)}{r^{\kappa}}$ & $\displaystyle\frac{u(x^0+rx)}{r^2|\log r|}$\\
		\hline
		Regularity of FB& $C^{1,\beta}$ for some $\beta\in(0,1)$\cite{w99}&$C^{1,\beta}$ for some $\beta\in(0,1)$\cite{w00}&{\color{blue}{$C^{1,\beta}$ for some $\beta\in(0,1)$
		[Theorem \ref{regularity}, main result in this paper]}}\\
		\hline
	\end{tabular}
\end{table}

		\subsection{Basic definitions and notations} \
		
In this subsection, we will give some definitions and notations in this paper. Firstly, for the sake of clarity, we provide a list of notations used in our paper.

$\bullet$ $B_1:=B_1(0)$, $B_r:=B_r(0)$, and $B_r^{\pm}(x^0):=\{x=(x_1,...,x_n)\in B_r(x^0): x_n\gtrless x^0_n\}$;

$\bullet$ denote $\mathbf{n}$ as the topological outward normal of the boundary of a given set, and $\nabla_{\theta}f:=\nabla f- (\nabla f\cdot \mathbf{n})\mathbf{n}$ as the surface derivative of a given function $f$;

$\bullet$ the notation $o(t)$ represents an infinitesimal term of higher order than $t$, i.e., $\displaystyle\lim_{t\to 0}\displaystyle\frac{o(t)}{t}=0$.

Secondly, in \cite{w99}, Weiss first introduced the influential Weiss adjusted boundary energy $W(r;u,x^0)$ (or $W(r;u)$ when $x^0=0$ and $W(u)$ when $x^0=0$ and $r=1$) to deal with the classical obstacle problem, which has become a focal point of scholarly exploration due to the fact that it not only provides the homogeneity of the blow-up limits but also allows for the definition of free boundary points using energy density. Recalling the pioneering work \cite{w99}, if $u$ is a minimizer of
\begin{equation}\label{c}
	\mathscr{J}(u;B_r(x^0)):=\int_{B_r(x^0)}|\nabla u|^2 +u dx,
\end{equation}
over the class $$\{u\in W^{1,2}(\Omega); u-\varphi \in W^{1,2}_0(\Omega), u\geq 0 \quad\text{in}\quad \Omega\},$$ for $\varphi\in W^{1,2}(\Omega)$ satisfying $\varphi\geq0$, then so-called Weiss adjusted boundary energy  
\begin{align}\label{cm}
	W(r;u,x^0):=&r^{-n-2}\mathscr{J}(u;B_r(x^0))-2r^{-n-3}\int_{\partial B_r(x^0)} u^2 d\mathcal{H}^{n-1}\nonumber\\
	=&\mathscr{J}(u_r;B_1)-2\int_{ \partial B_1} u_r^2 d\mathcal{H}^{n-1},
\end{align}
 is monotonically increasing with respect to $r$ (see \cite[Theorem 2]{w99}). Here $u_r:=\displaystyle\frac{u(x^0+rx)}{r^2}$, $x^0\in \mathscr{F}(u) $ and $0<r<\text{dist}(x^0,\partial \Omega)$.  A consequence of this monotonicity is that every blow-up limit is a 2-homogeneous globally defined minimizer to \eqref{c}. Notice that the energy \eqref{cm} implies that this functional has scaling property, namely,
 \begin{align}\label{se}
 	r^{-n-2} \mathscr{J}(u;B_r(x^0))=\mathscr{J}(u_r;B_1).
 \end{align}

In our article, we consider the following functional as in \eqref{eq1.0},

\begin{equation*}
	\mathscr{J}_0(u;B_r(x^0))=\int_{B_r(x^0)} \frac{1}{2}|\nabla u|^2+F(u)dx,
\end{equation*}
where $	F(u)=u( -\log u+1)$. Notice that the function $F(u)$ involves a logarithmic term, which does not possess the  scaling property, in other words,
\begin{align}\label{noscaling}
	\frac{1}{r^{n+2}|2\log r|^2}\mathscr{J}_0(u;B_r(x^0))&=\int_{ B_1}\frac{1}{2}|\nabla u_r|^2+\frac{1}{|2\log r|} (-\log \left(u_r r^2|2\log r|\right)+1)\nonumber\\
	&\neq \mathscr{J}_0(u_r;B_1), 
\end{align}
where $u_r=\displaystyle\frac{u(x^0+rx)}{r^2|2\log r|}$, moreover, the term ($\displaystyle\frac{1}{|2\log r|} (-\log \left(u_r r^2|2\log r|\right)+1)$) is not exclusively dependent on $u_r$. This introduces complexities when proving the monotonicity of $W(r;u,x^0)$ respect to $r$.  To address this, we propose a new Weiss adjusted boundary energy with a variable parameter $\alpha(r)$ which better helps us construct perfect square terms (see \eqref{W'}) after differentiating $W(r;u,x^0)$. Specifically, noting that $|\log r|=-\log r$ for $0<r\ll 1$, in the situation when we choose rescaling $$u_r(x):=\frac{u(x^0+rx)}{\mu(r)}:=\frac{u(x^0+rx)}{1-2r^2\log r},$$ to construct a complete square term (as $K(r;u,x^0)$ in \eqref{k}) for computing the derivatives $\displaystyle\frac{d}{dr}W(r;u,x^0)$ we will take
\begin{align*}
	\alpha(r)=\frac{2\mu(r)}{r\mu'(r)}=1-\frac{1}{2\log r}.
\end{align*}

\begin{remark}
	If we choose the scaling
	\begin{align*}
		\psi(r):=-2r^2\log r \qquad(0<r\ll 1),
	\end{align*}
	in stead of $\mu(r)$, it follows that
	\begin{align*}
		\alpha(r)=\frac{2\psi(r)}{r\psi'(r)}=1-\frac{1}{2\log r +1}.
	\end{align*} 
Under this scaling, our subsequent proofs can still hold true.

\end{remark}

Therefore, we provide following definitions that will be relevant for our work.
\begin{definition}
	Let $u$ be a solution of the problem \eqref{eq1.1} in $B_{r_0}(x^0)\subset \Omega$, then, for any $x^0\in \mathscr{F}(u)$, and $x\in B_{r_0}(x^0)$, we can define the \textit{blow-up sequence}
	$$u_r(x):=\frac{u(x^0+rx)}{\mu(r)}=\frac{u(x^0+rx)}{r^2(1-2\log r)},$$
	and then if $u_r(x)$ weakly converges to $u_0(x)$ in $W^{1,2}(\Omega)$, we say $u_0(x)$ is a \textit{blow-up limit} at $x^0$.
\end{definition}

\begin{definition} {\it(Weiss adjusted boundary energy) }\label{weissenergy}
	Let $u$ be a solution of the obstacle problem \eqref{eq1.1} in $B_{r_0}(x^0)$. Then one can define the following \textit{Weiss adjusted boundary energy} with variable parameter $\alpha(r)$
	\begin{equation}\label{weiss energy}
		\begin{aligned}
			W(r;u, x^0):=&\frac{\alpha(r)}{r^{n+2}(1-2\log r)^{2}}\mathscr{J}_0(u;B_r(x^0))\\
			&-\frac{1}{r^{n+3}(1-2\log r)^{2}}\int_{\partial B_r(x^0)}u^2 d\mathcal{H}^{n-1},
		\end{aligned}
	\end{equation}
	for $0<r\leq r_0<1$, where $$\mathscr{J}_0(u;B_r(x^0))=\int_{B_r(x^0)} \frac{1}{2}|\nabla u|^2+F(u)dx,$$
	\begin{align}\label{alp}
		\alpha(r)=1-\displaystyle\frac{1}{2\log r},
	\end{align}
	and $B_r(x^0)$ denotes an open ball with radius $r$ in $\mathbb{R}^n$ centered at $x^0$, and $\mathcal{H}^{n-1}$ denotes ${(n-1)}-$dimensional Hausdorff-measure.
\end{definition}

The idea regarding the Weiss adjusted boundary energy with a variable parameter is borrowed from those used in the two-phase obstacle problem discussed in \cite{ks21}. There is a good observation that we also can obtain that the blow-up limit is a homogeneous function of degree 2.

	Motivated by the classical obstacle problem, we split the free boundary $\mathscr{F}(u)$ of minimizer $u$ in regular and singular part according to the forms of the blow-up limits. Therefore, we firstly give the definition of the set of half-space solutions.
	\begin{definition} {\it(Half-space solutions)}
		The set of half space solutions is given by
	\begin{equation}\label{eq3.1}
	\mathbb{H}:=\left\{ h_{\nu}=\frac{1}{2}\max(x\cdot\nu,0)^2: \nu \ \text{unit\ vector\ of }\ \mathbb{R}^n\right\}.
\end{equation}
\end{definition}	
	
According to the definition of the set of half-space solutions, we given the following definition of regular points,
\begin{align*}
	\mathcal{R}_u :=\{x\in \mathscr{F} (u): \text{any blow-up limit at} \ x \ \text{is of the form } h_{\nu}\in\mathbb{H}\}.
\end{align*}

Due to Caffarelli's dochotomy theorem to the classical obstacle problem in the groundbreaking work \cite{c77} indicates that among all free boundary points, except for the case where the blow-up limits are half-space, the blow-up limits of the remaining free boundary points are in the form of quadratic polynomials ($\langle Ax,x\rangle $ for some nonnegative definite matrix $A\in\mathbb{R}^{n\times n}$ ). However, we are currently unable to achieve dichotomy for log-type obstacle problem, so we define free boundary points with blow-up limits other than half-spaces as singular points, namely,
\begin{align*}
	\Sigma_u :=\{x\in \mathscr{F} (u): \text{at least one blow-up limit at} \ x \ \text{is not of the form } h_{\nu}\in\mathbb{H}\}.
\end{align*}

\begin{remark}\label{rem1.4}
	Noted that the energy $\displaystyle \lim_{r\to 0+} W(r;u,x^0)$ is dimensional constant for any $x^0\in \mathcal{R}_u$ (we refer to this constant as to the \textit{energy density} at the regular points and denoted it by $\displaystyle\frac{\omega_n}{2}$), precisely, we have 
$$\displaystyle \lim_{r\to 0+} W(r;u,x^0)=\frac{\mathcal{H}^{n-1}(\partial B_1)}{8n(n+2)}:=\displaystyle\frac{\omega_n}{2},\qquad \text{for any}\quad x^0\in \mathcal{R}_u.$$ 
\end{remark}
For the calculations here, please refer to Lemma \ref{property} (2) in conjunction with the definition of $\mathcal{R}_u$.
As we mentioned before, the energy $W(r;u,x^0)$ in \eqref{weiss energy} here lacks of a scaling property due to the logarithmic type term. Indeed, let we recall the scaling property \eqref{se} as mentioned the problem \eqref{cm} in \cite{w99},  which satisfies the following equation
\begin{equation*}
	W(r;u)=W(1;u_r):=\mathscr{J}(u_r;B_1)-2\int_{\partial B_1} u_r^2d\mathcal{H}^{n-1},
\end{equation*}
for $W(r;u,x^0)$ defined in \eqref{cm}. This shows that the \textit{balance energy }can be given as 
\begin{equation}\label{M1}
	M_0(v):=\mathscr{J}(v;B_1)-2\int_{\partial B_1} v^2d\mathcal{H}^{n-1},
\end{equation}
it is worth mentioning that this $M_0(v)$ is independent of $r$. Specifically, by taking $v=u_r$ in \eqref{M1}, it implies that $M_0(u_r)=W(1;u_r)$. Consequently, the author established the contraction (i.e. epiperimetric inequality) of the balance energy for $M_0(v)$.  However, in our situation, it does not possess scaling property, namely,
\begin{equation*}
	W(r;u)\neq W(1;u_r),
\end{equation*}
and it suggests that, in contrast to the well-established frameworks of the classical obstacle problem \cite{w99} and the homogeneous case \cite{w00}, we encounter a challenge in identifying a function $M_0(v)$ (similar to the aforementioned functional \eqref{M1}) due to the transformation undergone by the form upon scaling. Specifically, when applying a coordinate transformation to the Weiss adjusted boundary energy, it implies that compared with the classical obstacle problem \cite{w99} and homogeneous case \cite{w00} we cannot directly find the functional $M_0(v)$, since the form has been changed after scaling. Namely, we perform a coordinate transformation for the Weiss adjusted boundary energy
\begin{equation*}
	W(r;u,x^0)=\alpha(r) \int_{ B_1}\frac{1}{2}|\nabla u_r|^2+{G}(r;u_r) dx-\int_{\partial B_1} u_r^2 d\mathcal{H}^{n-1},
\end{equation*}
where ${G}(r;v)$ is given by
\begin{equation*}
	{G}(r;v)=\displaystyle\frac{v}{1-2\log r}\left[-\log \left(vr^2(1-2\log r)\right) +1\right],\quad\text{for any}\quad v\geq 0.
\end{equation*}
To address this issue and cater to our subsequent requirements in establishing the epiperimetric inequality, we introduce a novel function $M(r;v)$. This functional accounts for the additional dependence on the radial parameter $r$ as follows,

\begin{align}\label{M(r;v)}
	M(r;v):=\alpha(r)\int_{B_1}\frac{1}{2}|\nabla v|^2+{G}(r;v)dx-\int_{\partial B_1} v^2 d\mathcal{H}^{n-1},
\end{align}
and 
\begin{align}\label{M_0(v)}
	\displaystyle\lim_{r\to 0+} M(r;v) =M_0(v):=\int_{B_1}\frac{1}{2}|\nabla v|^2+vdx-\int_{\partial B_1} v^2 d\mathcal{H}^{n-1}.
\end{align}
Then we will establish the contraction of energy (via epiperimetric inequality) for $M_0(v)$, which gives the uniqueness of the blow-up limit.

Furthermore, we would like to emphasize that $M_0(h_{\nu})$ takes for any $h_{\nu}\in\mathbb{H}$ the uniform value for any $\nu \in \partial B_1$, namely

	\begin{align}\label{alp_n}
		M_0(h_{\nu})=M_0\Big(\frac{1}{2}\max(x\cdot\nu,0)^2\Big)=\frac{1}{4}\int_{B_1}\max(x\cdot\nu,0)^2 dx =\frac{1}{8n}\int_{ B_1}|x|^2dx =\frac{\mathcal{H}^{n-1}(\partial B_1)}{8n(n+2)}:=	\frac{\omega_n}{2}.
	\end{align}
It means that for any $x^0\in \mathcal{R}_u$, based on Remark \ref{rem1.4}, we can deduce that
\begin{align*}
	\lim_{r\to 0+} W(r;u,x^0)=M_0(h_{\nu})=\frac{\omega_n}{2}.
\end{align*} 

	Consequently, based on above result and \cite[Corollary 2]{w99}, a fresh perspective is offered concerning the set of regular free boundary points (denoted as $\mathcal{R}_u$), namely, we can use density to determine the form of the blowup limit as proposition \ref{regular}. This set is defined as the collection of free boundary points where at least one blow-up limit coincides with a half-plane solution. The key rationale lies in the observation that half-plane solutions $h_{\nu}$ exhibit a lower energy density level $M_0(u)$ than any other homogeneous solution $u$ of degree 2 (see Theorem \ref{isolated}).

	\subsection{Main results and plan of this paper}\
	
		In this paper, the primary outcome regarding the regularity of the free boundary $\mathscr{F}({u})$ is outlined in the following theorem. 
	
	\begin{theorem}{(Regularity of free boundary)}\label{regularity}
		The free boundary $\mathscr{F}({u})$ is in an open neighbourhood of the regular points set $\mathcal{R}_{{u}}$ locally a $C^{1,\beta}$-surface. Here $\beta:=\frac{2\eta+n\eta}{2+n\eta}\in(0,1)$ for some $\eta\in(0,1)$.
	\end{theorem}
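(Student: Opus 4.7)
The plan is to adapt Weiss's epiperimetric-inequality strategy (as in \cite{w99}) to this non-scaling setting, using the modified Weiss energy $W(r;u,x^0)$ with variable parameter $\alpha(r) = 1 - \frac{1}{2\log r}$ and the limit balance functional $M_0(v)$ defined in \eqref{M_0(v)}.

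First I would establish an almost-monotonicity formula of the form
$$\frac{d}{dr}W(r;u,x^0) = K(r;u,x^0) - \frac{d}{dr}Q(r;u,x^0),$$
where $K(r;u,x^0) \ge 0$ is a perfect-square boundary term in $\partial_r u_r - 2u_r/r$ (the precise choice of $\alpha(r)$ being dictated by the requirement that all cross terms produced by the logarithmic scaling defect cancel upon differentiation), and $Q(r;u,x^0)$ is the integrable remainder generated by the mismatch $G(r;v) - v$ in the coordinate-transformed functional. Integrating in $r$ and using Theorem A gives a finite limit $W(0+;u,x^0)$; the finiteness of $\int_0^{r_0} K\,dr$ then forces every blow-up limit $u_0$ to be a non-negative $2$-homogeneous minimizer of the limit functional $\int_{B_1}(|\nabla v|^2/2 + v)\,dx$. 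At a regular point $x^0 \in \mathcal{R}_u$, this means $u_0 = h_\nu$ for some unit vector $\nu$, and by \eqref{alp_n} we have $W(0+;u,x^0) = \omega_n/2 = M_0(h_\nu)$.

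Next I would prove the epiperimetric inequality for $M_0$: there exist $\eta \in (0,1)$ and $\delta > 0$ such that for every $2$-homogeneous $c \in W^{1,2}(B_1)$, $c \ge 0$, with $\|c - h_\nu\|_{W^{1,2}(B_1)} \le \delta$ for some $h_\nu \in \mathbb{H}$, one can construct a competitor $v \in W^{1,2}(B_1)$ with $v = c$ on $\partial B_1$ and
$$M_0(v) - \frac{\omega_n}{2} \le (1-\eta)\bigl[M_0(c) - \frac{\omega_n}{2}\bigr].$$
The competitor is built by the classical Weiss cut-and-paste: interpolate between the $2$-homogeneous extension of $c|_{\partial B_1}$ and the nearest half-space solution $h_\nu$, using the strict convexity of $M_0$ at $\mathbb{H}$ together with a spectral gap for the spherical harmonic expansion of $c - h_\nu$ on $\partial B_1$. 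This is the heart of the argument and the main obstacle: the functional directly controlled by almost-monotonicity is $M(r;\cdot)$, not $M_0$, and the difference $M(r;v) - M_0(v)$ is of size $O(1/|\log r|)\|v\|_{W^{1,2}(B_1)}$. This logarithmic defect has to be absorbed into the contraction, which forces one to run the epiperimetric inequality only at scales with $r$ small enough and at the cost of shrinking $\eta$.

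To conclude, testing the minimality of $u_r = u(x^0 + r\cdot)/\mu(r)$ against the rescaling of the epiperimetric competitor inside the functional $M(r;\cdot)$ and combining with the almost-monotonicity yields a differential inequality for $e(r) := W(r;u,x^0) - \omega_n/2$ of the form
$$e(r) \le (1-\eta')\, r\, e'(r) + \text{integrable remainder},$$
which integrates to an energy decay $e(r) \le C r^{\gamma}$ for some $\gamma = \gamma(\eta,n) > 0$, uniform in $x^0$ on a neighborhood of a fixed regular point. A dyadic summation and the identity $\int_{\partial B_1}(u_r - h_{\nu(x^0)})^2\,d\mathcal{H}^{n-1} \lesssim e(r)$ then yield the $L^2(\partial B_1)$-rate $\|u_r - h_{\nu(x^0)}\|_{L^2(\partial B_1)} \le C r^{\gamma/2}$, giving the uniqueness of the blow-up direction $\nu(x^0)$ and a Hölder modulus $|\nu(x^0) - \nu(y^0)| \le C|x^0-y^0|^{\beta}$ for $x^0,y^0 \in \mathcal{R}_u$ near a fixed regular point, with $\beta = \frac{(n+2)\eta}{2+n\eta}$. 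Combined with the non-degeneracy and supercharacteristic growth of Theorem A, this traps $\mathscr{F}(u)$ between two parallel $C^{1,\beta}$ graphs in the normal direction $\nu(x^0)$, and standard flatness-to-regularity arguments deliver the claimed $C^{1,\beta}$-regularity of $\mathscr{F}(u)$ near $\mathcal{R}_u$.
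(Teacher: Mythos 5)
Your skeleton (variable-parameter monotonicity $\to$ epiperimetric inequality $\to$ energy decay $\to$ uniqueness of blow-up $\to$ H\"older continuity of the normal $\to$ $C^{1,\beta}$ graph) is the paper's, and your exponent bookkeeping $\beta=\frac{(n+2)\eta}{2+n\eta}$ agrees with the statement. The gap is precisely in the step you yourself flag as ``the heart of the argument'': your treatment of the defect between $M(r;\cdot)$ and $M_0$ does not work. Writing $L=-\log r$, a direct computation from \eqref{M(r;v)} and \eqref{M_0(v)} gives
\begin{equation*}
M(r;v)-M_0(v)=\frac{1}{2L}\int_{B_1}\Big(\frac12|\nabla v|^2+v(1-\log v)\Big)\,dx-\frac{\log(1+2L)}{2L}\int_{B_1}v\,dx,
\end{equation*}
so the defect is an \emph{additive} error of size $\sim \frac{\log|\log r|}{|\log r|}$ whose magnitude is independent of $\|\mathcal{C}-h_\nu\|_{W^{1,2}(B_1)}$. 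It therefore cannot be dominated by the contracted quantity $M_0(\mathcal{C})-\frac{\omega_n}{2}$, which is of order $\delta^2$ and can be arbitrarily small; and ``shrinking $\eta$'' weakens the contraction you are trying to hide the error in, it does not help. Worse, when this error is carried into your differential inequality for $e(r)$ it enters at size $\frac{1}{r}\cdot\frac{\log|\log r|}{|\log r|}$, which is \emph{not} integrable near $r=0$ (substitute $s=|\log r|$), so your ``integrable remainder'' claim is false; integrating the resulting inequality yields only $e(r)\lesssim \frac{\log|\log r|}{|\log r|}$, a logarithmic decay that gives no H\"older modulus for $\nu$ and hence no $C^{1,\beta}$ conclusion. (A secondary inaccuracy: the epiperimetric inequality for $M_0$ alone is, up to a factor $2$, exactly Weiss's classical theorem for the obstacle problem, and Weiss's proof is by compactness/contradiction, not by the direct spectral construction you describe; the paper likewise argues by contradiction.)

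The paper closes this gap with two devices absent from your proposal. First, the epiperimetric inequality (Lemma \ref{epip}) is proved not for $M_0$ but for the $r$-dependent energy $M(s;\cdot)$ with the subtracted correction $\int_0^s\overline{Q}(r;u,x^0)\,dr$, where by \eqref{Q} $\overline{Q}=Q-\frac{1}{r(1-2\log r)^{1+\gamma}}$ contains an artificial positive cushion, and the inequality is asserted only for $s\le s_0(\delta)=o\big(e^{-\delta^{-2/\gamma}}\big)$, a scale restriction calibrated so that every logarithmic defect is $o(\delta_k^2)$ in the compactness argument. Second, and crucially, in the proof of Proposition \ref{uniqueness} the non-integrable $\frac{1}{r|\log r|}$-size terms in $\mathscr{E}'(r)$ are never estimated in absolute value: they are grouped into $\Psi(r;u,x^0)$ in \eqref{Psi}, whose dangerous part is a multiple of
\begin{equation*}
\Phi(r;u,x^0)=W(r;u,x^0)-\frac{2\alpha(r)\log r}{(4\log r-1)(n+2)}\int_{\partial B_1}u_r\,d\mathcal{H}^{n-1},
\end{equation*}
and $\Phi(r;u,x^0)\to 0$ precisely because blow-up limits are $2$-homogeneous, so that $\int_{B_1}u_0\,dx=\frac{1}{n+2}\int_{\partial B_1}u_0\,d\mathcal{H}^{n-1}$; a secondary monotonicity argument gives $\Phi(r)\ge\int_0^rP(s)\,ds$ with $P$ integrable, and the cushion $\frac{1}{r(1-2\log r)^{1+\gamma}}$ (this is where $\gamma<1$ is used) dominates what remains, yielding the sign claim \eqref{claim}, $\Psi\ge 0$. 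Without this cancellation-plus-cushion structure, or a genuine substitute for it, your differential inequality does not close and the proof does not go through.
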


	\begin{remark}
		In fact, the parameter $\eta$ is a contractive parameter determined by the "epiperimetric inequality" stated in following Lemma \ref{epip}.
	\end{remark}

	One of the key elements in the proof of Theorem \ref{regularity} is the uniqueness of the blow-up limits near the regular points. The epiperimetric inequality is a fundamental tool in establishing the energy decay to obtain the uniqueness of the blow-up limit which ensures that our analysis is robust and leads to the $C^{1,\beta}$ regularity of the regular part of the free boundary.

	\begin{remark}
		Recently, Fotouhi and Khademloo \cite{fk} conducted a comprehensive investigation into the existence, optimal regularity, growth estimates, and non-degeneracy of minimizers at free boundary points for an obstacle problem with more singular forcing term $f(u)=u^q\log u$ for $u\geq 0$ and $q\in(-1,0)$. Notably, the novelty lies in the intensified singularity on the right-hand side as $q$ approaches $-1$ when considering the behavior near the free boundary. Therefore, we will investigate the regularity of the free boundary in our future work.
	\end{remark}
	
	Next, we present our fundamental tool epiperimetric inequality. Unlike existing literature, we introduce a new energy incorporating an additional term (see \eqref{Q}) (integrable in $(0,r)$) with the variable parameter $\alpha(r)$, offering a fresh perspective on the epiperimetric inequality. This approach is expected to be flexible enough to apply to a wide variety of problems.

		\begin{lemma}{ (Epiperimetric inequality)}\label{epip}
		There exist $ \eta \in (0,1)$ and $ \delta>0 $, such that if $\mathcal{C}$ is a non-negative homogeneous global function of degree $2$ satisfying $$\|\mathcal{C}-h_{\nu}\|_{W^{1,2}(B_1)}\leq \delta,\quad \text{for some}\quad h_{\nu}\in\mathbb{H},$$ then there exists a function $v\in W^{1,2}(B_1)$ with $v=\mathcal{C}$ on $\partial B_1$ satisfying
		\begin{equation}\label{6.1}
			M(s;v)-M_0(h_{\nu})-\int_{0}^{s}\overline{Q}(r;u,x^0)dr\leq (1-\eta) \left(M(s;\mathcal{C})- M_0(h_{\nu}) -\int_{0}^{s}\overline{Q}(r;u,x^0)dr\right),
		\end{equation}
			for any  $s\in(0, s_0(\delta)]$, where $s_0(\delta)=o\left(e^{-\delta^{-\frac{2}{\gamma}}}\right)$, moreover, $M(s;v)$ and $M(s;\mathcal{C})$ following the formulation given in \eqref{M(r;v)}, along with $M_0(h_{\nu})$  from the definition \eqref{M_0(v)}. Additionally, we introduce a modified function $\overline{Q}(r;u,x^0)$, which is obtained by subtracting a specific term from $Q(r;u,x^0)$, as shown in following equation,
		
		\begin{equation}\label{Q}
			\overline{Q}(r;u,x^0)=Q(r;u,x^0)-\frac{1}{r(1-2\log r)^{1+\gamma}},\qquad \text{for any}\ \gamma\in(0,1). 
		\end{equation}
		The function $Q(r;u,x^0)$ itself, as established in Lemma \ref{monotonicity}, is integrable over the interval $(0,r_0)$.
	\end{lemma}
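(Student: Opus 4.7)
The plan is to reduce the statement to the classical Weiss epiperimetric inequality from \cite{w99} for the limit functional $M_0$ by using the threshold $s_0(\delta)$ to make the correction $M(s;\cdot)-M_0(\cdot)$ small, and then using the subtracted piece inside $\overline{Q}$ to absorb the residual additive error back into the contraction. The overall scheme is standard (contradiction-plus-compactness at the classical step), while the novel ingredient is the exact bookkeeping between the logarithmic error and the auxiliary quantity $\overline{Q}$.

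First, a direct computation from the definitions of $\alpha(r)$ and $G(r;v)$ yields the error expansion
\begin{equation*}
M(s;v) - M_0(v) = (\alpha(s)-1)\int_{B_1}\tfrac12|\nabla v|^2\,dx + \int_{B_1}\bigl[G(s;v)-v\bigr]\,dx = O(|\log s|^{-1}),
\end{equation*}
with constants depending only on $\|v\|_{W^{1,2}(B_1)} + \|v\|_{L^\infty(B_1)}$, which are controlled uniformly for all competitors coming from $2$-homogeneous functions close to $h_\nu$. In parallel, the classical Weiss epiperimetric inequality in \cite{w99} provides $\eta_0 \in (0,1)$ and $\delta_0 > 0$ such that for every non-negative $2$-homogeneous $\mathcal{C}$ with $\|\mathcal{C}-h_\nu\|_{W^{1,2}(B_1)}\leq \delta_0$ there exists $v \in W^{1,2}(B_1)$ with $v = \mathcal{C}$ on $\partial B_1$ satisfying
\begin{equation*}
M_0(v) - M_0(h_\nu) \leq (1-2\eta_0)\bigl[M_0(\mathcal{C}) - M_0(h_\nu)\bigr].
\end{equation*}
The classical proof, which I would treat as a black box, rests on Taylor-expanding $M_0(h_\nu + \delta\phi)$, using that the linear term vanishes at $h_\nu$, and exploiting the spectral gap of the resulting quadratic form on the orthogonal complement of the tangent space to $\mathbb{H}$, with the explicit competitor produced by a spherical-harmonic decomposition of the boundary trace.

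Second, I would take this same $v$ as the competitor for $M(s;\cdot)$ and chain the two estimates to obtain, uniformly for $s \leq s_0(\delta)$ and $\|\mathcal{C}-h_\nu\|_{W^{1,2}(B_1)}\leq \delta$,
\begin{equation*}
M(s;v) - M_0(h_\nu) \leq (1-2\eta_0)\bigl[M(s;\mathcal{C}) - M_0(h_\nu)\bigr] + C|\log s|^{-1}.
\end{equation*}
The additive slack $C|\log s|^{-1}$ is then absorbed using the subtracted piece $-1/[r(1-2\log r)^{1+\gamma}]$ built into $\overline{Q}$: a direct integration shows that its primitive up to $s$ is comparable to $|\log s|^{-\gamma}$ with positive constant, so $\int_0^s \overline{Q}(r;u,x^0)\,dr$ picks up a negative contribution of that order. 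Choosing $\eta := \eta_0$ and the threshold $s_0(\delta)$ so that $|\log s_0(\delta)|^{-1} \leq \delta^{2/\gamma}$, which is exactly the requirement $s_0(\delta) = o(e^{-\delta^{-2/\gamma}})$ in the statement, ensures that $C|\log s|^{-1}$ is dominated by $\eta$ times that negative contribution; transferring this to the right-hand side of the displayed inequality yields \eqref{6.1}.

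The main obstacle I expect is precisely the interplay between the three small parameters $|\log s|^{-1}$, $|\log s|^{-\gamma}$, and $\delta$: the quadratic improvement from the classical step carries a factor $M_0(\mathcal{C}) - M_0(h_\nu)$ which is itself only of order $\delta^2$, so the logarithmic error must be strictly smaller, and the subtracted piece inside $\overline{Q}$ must supply just enough negative mass to absorb it without destroying the integrability of $Q(r;u,x^0)$ established in Lemma \ref{monotonicity}. Verifying that the exponent $1+\gamma$ in the subtracted piece makes $\overline{Q}$ still integrable on $(0,r_0)$, and that the resulting threshold $s_0(\delta)$ has the precise form $o(e^{-\delta^{-2/\gamma}})$, is the delicate bookkeeping that distinguishes this lemma from the classical case; once it is in place, the rest of the argument is a straightforward combination of the reduction and the Weiss epiperimetric inequality.
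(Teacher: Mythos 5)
Your proposal takes a genuinely different route from the paper (the paper never invokes the classical Weiss inequality as a black box; it reruns the whole compactness/contradiction argument directly for the modified functional $M(s;\cdot)$), but the route as you describe it has a fatal sign error in the absorption step. Since $\overline{Q}(r;u,x^0)=Q(r;u,x^0)-\frac{1}{r(1-2\log r)^{1+\gamma}}$ and $Q$ is of strictly lower order (Remark \ref{rem3.2}), the quantity $E(s):=\int_0^s\overline{Q}(r;u,x^0)\,dr$ is \emph{negative}, with $-E(s)=\frac{1}{2\gamma}(1-2\log s)^{-\gamma}\left(1+o(1)\right)$. Rearranging \eqref{6.1} gives the equivalent target $M(s;v)-M_0(h_\nu)\leq(1-\eta)\left(M(s;\mathcal{C})-M_0(h_\nu)\right)+\eta E(s)$, and $\eta E(s)<0$: the subtracted piece enters the right-hand side as a \emph{penalty} of size $\eta\,\frac{1}{2\gamma}(1-2\log s)^{-\gamma}$, not as negative mass available to absorb your error term. (This is unavoidable: the piece is subtracted in \eqref{Q} precisely so that $\overline{W}=W-\int_0^r\overline{Q}$ becomes monotone, and what helps the monotonicity must work against the contraction.) Chaining your two estimates therefore requires $C|\log s|^{-1}+\eta_0\left(-E(s)\right)\leq\eta_0\left(M(s;\mathcal{C})-M_0(h_\nu)\right)$, i.e.\ a lower bound of order $|\log s|^{-\gamma}$ on the classical excess $M_0(\mathcal{C})-M_0(h_\nu)$, which nothing in your scheme provides. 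Indeed it fails outright in the regime where $\mathcal{C}$ is much closer to $\mathbb{H}$ than $\delta$: for $\mathcal{C}=h_\nu$ (allowed, since only $\|\mathcal{C}-h_\nu\|_{W^{1,2}(B_1)}\leq\delta$ is assumed) the excess vanishes, $M(s;h_\nu)-M_0(h_\nu)=O\left(\log(|\log s|)/|\log s|\right)$, while every admissible nonnegative competitor $v$ with boundary data $h_\nu$ satisfies $M_0(v)-M_0(h_\nu)=\int_{B_1}\frac12|\nabla(v-h_\nu)|^2dx+\int_{\{x\cdot\nu\leq 0\}}v\,dx\geq0$ and $M(s;v)-M_0(v)\geq -C\log(|\log s|)/|\log s|$; your target would then force $\eta\,\frac{1}{2\gamma}(1-2\log s)^{-\gamma}\leq C\log(|\log s|)/|\log s|$, false for small $s$ because $\gamma<1$.

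For comparison, the paper's proof linearizes by contradiction: it sets $w_k=(\mathcal{C}_k-h)/\delta_k$, where by the normalization \eqref{h1} the number $\delta_k$ is the \emph{actual} distance $\inf_{h_\nu\in\mathbb{H}}\|\mathcal{C}_k-h_\nu\|_{W^{1,2}(B_1)}$, and the contradiction hypothesis \eqref{eq3.2} supplies $s_k\leq s_0(\delta_k)$ for that same $\delta_k$. It is only through this coupling of $s_k$ to the actual distance that the adversarial terms $\frac{1}{\delta_k^2}\int_0^{s_k}\overline{Q}$ and $\frac{1}{\delta_k^2\log s_k}$ become $o(1)$ (see \eqref{Qde}), i.e.\ negligible at the quadratic scale on which the linearization lives, after which the argument reduces to the four claims showing $w_k\to0$ strongly. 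Your perturbative scheme has no mechanism coupling $s$ to the actual distance of $\mathcal{C}$ to $\mathbb{H}$ — the classical inequality contracts the excess by a fixed factor but gives no lower bound on it — and that is exactly the missing idea; it cannot be patched by bookkeeping among $|\log s|^{-1}$, $|\log s|^{-\gamma}$ and $\delta$ alone. A lesser inaccuracy: your expansion $M(s;v)-M_0(v)=O(|\log s|^{-1})$ drops the factor $\log(1-2\log s)$ coming from $G(s;v)-v=\frac{v\left(-\log v-\log(1-2\log s)\right)}{1-2\log s}$; the correct size is $O\left(\log(|\log s|)/|\log s|\right)$, which is still $o\left((1-2\log s)^{-\gamma}\right)$, so this by itself would not be damaging.
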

	
	\begin{remark}
		In this epiperimetric inequality, to overcome the difficulties posed by the fact that $\alpha(r)$ is a variable and there is an additional term involving the integral of the integrable function $\overline{Q}(r;u,x^0)$, we set $s_0(\delta)=o\left(e^{-\delta^{-\frac{2}{\gamma}}}\right)$. Specifically, this implies that $\left(-\log s_0(\delta)\right)^{-1}=o(\delta^{\frac{2}{\gamma}})$, which allows our additional term to approach zero as $\delta\to 0$. This adjustment ensures that as $\delta\to 0$, the impact of the variable $\alpha(r)$ and the integral of $\overline{Q}(r;u,x^0)$ becomes negligible, facilitating the analysis and simplification of the inequality. The detailed application of this can be found in Section 3, where the epiperimetric inequality is proven.
	\end{remark}

	Epiperimetric inequality has been used in both minimal surfaces and obstacle-type free boundary problems, they can be divided into two categories according to the methods of proofs:
	
	\textit{The direct epiperimetric inequalities} was initially presented by Reifenberg \cite{r} and White \cite{wh} within the realm of minimal surfaces in the 60s. This idea was later expanded to address the free boundary context by the authors in \cite{csv18,sv,sv21}. The direct method relies on the explicit construction of the competitor $v$ and is particularly suited for deriving decay estimates around singular points.
	
	\textit{The epiperimetric inequalities by contradiction} are based on the compactness of the sequence of linearizations, as discussed in references such as \cite{w99,ta}. However, the contradiction arguments available in the literature are typically applicable only to regular points or to singular points under specific and restrictive conditions, which might also apply to the integrable case in our framework. Therefore, we will use contradiction approach in the proof of the epiperimetric inequality presented in Section 3.
	
	Finally, we show that Lemma \ref{epip} yields the desired uniqueness of the blow-up.

	\begin{proposition}{(Energy decay and uniqueness of blow-up limit)}\label{uniqueness}
		Let $x^0\in \mathscr{F}(u)$ and suppose that the epiperimetric inequality \eqref{6.1} holds true with $\eta\in(0,1)$ and $r_0 \in (0,1)$, for each $\mathcal{C}_r$ as follows,
		$$\mathcal{C}_r(x):=|x|^2 u_r\left(\frac{x}{|x|}\right)=\frac{|x|^2}{r^2(1-2\log r)} u\left(x^0+\frac{r}{|x|}x\right), \quad \text{for any}\quad 0<r\leq r_0<1.$$
		Assume that $u_0$ denotes an arbitrary blow-up limit of $u$ at $x^0$.
		Then there holds
		\begin{equation}\label{eq4.3}
			\left|\overline{W}(r;u,x^0)-\overline{W}(0+;u,x^0) \right|\leq \left|\overline{W}(r_0;u,x^0)-\overline{W}(0+;u,x^0) \right| \left(\frac{r}{r_0}\right)^{\frac{(n+2)\eta}{1-\eta}}, 
		\end{equation}
		for $r\in(0,r_0)$, where $\overline{W}(r;u,x^0):=W(r;u,x^0)-\int_{0}^{r}\overline{Q}(s;u,x^0)ds$,  and there is a constant $C(n)>0$ such that
		\begin{equation}\label{eq4.4}
			\int_{\partial B_1}\left|u_r(x)-u_0(x)\right|d\mathcal{H}^{n-1}\leq C(n) \left|\overline{W}(r_0;u,x^0)-\overline{W}(0+;u,x^0) \right|^{\frac{1}{2}}\left(\frac{r}{r_0}\right)^{\frac{(n+2)\eta}{2(1-\eta)}},
		\end{equation}
		for $r\in(0,\frac{r_0}{2})$, and $u_0$ is the unique blow-up limit of $u$ at $x^0$.
	\end{proposition}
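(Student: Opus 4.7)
The strategy is to combine the epiperimetric inequality with the minimality of $u$ to produce a differential inequality for $e(r):=\overline{W}(r;u,x^0)-\overline{W}(0+;u,x^0)$, integrate it to obtain \eqref{eq4.3}, and then transfer this decay to the traces $u_r|_{\partial B_1}$ by a dyadic slicing argument to obtain \eqref{eq4.4} and the uniqueness of the blow-up. First, since Lemma \ref{epip} is assumed to apply to every $\mathcal{C}_r$, the homogeneous extensions $\mathcal{C}_r$ lie within $\delta$ in $W^{1,2}(B_1)$ of some $h_\nu\in\mathbb{H}$, so by Remark \ref{rem1.4} and \eqref{alp_n} one has $\overline{W}(0+;u,x^0)=M_0(h_\nu)=\omega_n/2$. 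Applying Lemma \ref{epip} to $\mathcal{C}_r$ produces a competitor $v_r$ with $v_r=\mathcal{C}_r=u_r$ on $\partial B_1$ and
\[
M(r;v_r)-M_0(h_\nu)-\int_0^r\overline{Q}\,ds\leq(1-\eta)\Bigl(M(r;\mathcal{C}_r)-M_0(h_\nu)-\int_0^r\overline{Q}\,ds\Bigr).
\]
Since $u$ minimizes $\mathscr{J}_0$ and $W(r;u,x^0)=M(r;u_r)$, the minimality of $u_r$ for $M(r;\cdot)$ among functions sharing its trace gives $M(r;u_r)\leq M(r;v_r)$, and therefore $\eta\,e(r)\leq(1-\eta)\bigl(M(r;\mathcal{C}_r)-M(r;u_r)\bigr)$.

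Next I would compute $\tfrac{d}{dr}\overline{W}(r;u,x^0)$ in rescaled coordinates. As in the classical Weiss identity, but now keeping track of the variable parameter $\alpha(r)$ and of the non-scaling terms coming from $G(r;\cdot)$, which are exactly absorbed into $\overline{Q}$, one obtains
\[
r\,\frac{d}{dr}\overline{W}(r;u,x^0)\;\geq\;(n+2)\bigl(M(r;\mathcal{C}_r)-M(r;u_r)\bigr).
\]
Combining this with the epiperimetric step yields the differential inequality $re'(r)\geq\tfrac{(n+2)\eta}{1-\eta}e(r)$, so that $s\mapsto e(s)\,s^{-(n+2)\eta/(1-\eta)}$ is non-increasing on $(0,r_0]$. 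Integrating from $r$ to $r_0$ produces \eqref{eq4.3}.

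For \eqref{eq4.4}, I would use that on $\partial B_1$ one has $\partial_su_s(\theta)=\tfrac{1}{s}\bigl(\partial_\nu u_s(\theta)-\tfrac{2}{\alpha(s)}u_s(\theta)\bigr)$, so the perfect-square term $K$ in \eqref{k} gives $\|\partial_su_s\|_{L^2(\partial B_1)}^{2}\leq Cs^{-1}\,\overline{W}'(s)$. For $0<r<r_0/2$, a Cauchy--Schwarz argument on the dyadic ring $(2^{-k-1}r,2^{-k}r]$ combined with \eqref{eq4.3} gives
\[
\|u_{2^{-k-1}r}-u_{2^{-k}r}\|_{L^1(\partial B_1)}\leq C\bigl(\overline{W}(2^{-k}r)-\overline{W}(2^{-k-1}r)\bigr)^{1/2}\leq C(n)\bigl|\overline{W}(r_0;u,x^0)-\overline{W}(0+;u,x^0)\bigr|^{1/2}\Bigl(\tfrac{2^{-k}r}{r_0}\Bigr)^{\frac{(n+2)\eta}{2(1-\eta)}}.
\]
Summing the geometric series in $k$ yields \eqref{eq4.4}. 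Uniqueness of the blow-up follows at once: $\{u_r|_{\partial B_1}\}$ is Cauchy in $L^1(\partial B_1)$, and any blow-up limit $u_0$, being $2$-homogeneous, is determined by its trace.

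The main obstacle is the comparison $r\,\overline{W}'(r)\geq(n+2)(M(r;\mathcal{C}_r)-M(r;u_r))$ in the second step. The logarithmic forcing term breaks exact $2$-homogeneity, so differentiating $W$ introduces extra terms that are not a priori sign-definite and that mix $\alpha(r)$, $\alpha'(r)$, and $\partial_rG(r;u_r)$. The variable parameter $\alpha(r)=1-\tfrac{1}{2\log r}$ was chosen precisely so these extras combine into a perfect square (the $K$ in \eqref{k}) plus an integrable residual, the latter being absorbed by the correction $-\tfrac{1}{r(1-2\log r)^{1+\gamma}}$ built into $\overline{Q}$; checking that the bookkeeping produces exactly the constant $(n+2)$ with a non-negative remainder so that the ODE closes is the technical crux of the argument.
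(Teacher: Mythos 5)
Your skeleton coincides with the paper's proof: you set $\mathscr{E}(r)=\overline{W}(r;u,x^0)-\overline{W}(0+;u,x^0)$, combine the epiperimetric inequality \eqref{6.1} with the minimality of $u_r$ among competitors sharing its trace (Remark \ref{r3.3}) to close the linear differential inequality $\mathscr{E}'(r)\geq\frac{(n+2)\eta}{1-\eta}\frac{\mathscr{E}(r)}{r}$, integrate to get \eqref{eq4.3}, and then obtain the trace estimate \eqref{eq4.4} by Cauchy--Schwarz against the perfect-square term of Lemma \ref{monotonicity} followed by dyadic summation; this is precisely the chain \eqref{eq4.9}--\eqref{eq4.12} in the paper, and your $L^{2}(\partial B_1)$ bound on $\partial_s u_s$ and geometric-series argument are correct.

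The genuine gap is the step you flag as the crux and then merely assert: $r\,\frac{d}{dr}\overline{W}(r;u,x^0)\geq(n+2)\bigl(M(r;\mathcal{C}_r)-M(r;u_r)\bigr)$, justified by saying the non-scaling terms coming from $G(r;\cdot)$ are ``exactly absorbed into $\overline{Q}$''. That is not what happens, and it is not bookkeeping. Differentiating $\mathscr{E}$ (the paper's \eqref{eq4.5}--\eqref{eq4.6}) and extracting the $(n+2)$-term and the perfect square leaves the remainder $\Psi(r;u,x^0)$ of \eqref{Psi}, consisting of a term $\frac{1-4\log r}{r(-\log r)(1-2\log r)}\,W(r;u,x^0)$, the boundary term $-\frac{2\alpha(r)}{(n+2)r}\int_{\partial B_1}\frac{\mathcal{C}_r}{1-2\log r}\,d\mathcal{H}^{n-1}$, and $-\overline{Q}(r;u,x^0)$. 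The first two are each of size comparable to $\frac{1}{r|\log r|}$ and of opposite signs, so their sum has no a priori sign, and it is not dominated term-by-term by $-\overline{Q}$, which is only of order $\frac{1}{r|\log r|^{1+\gamma}}$. The paper's proof of the claim \eqref{claim} that $\Psi\geq 0$ for small $r$ requires a separate apparatus: the auxiliary quantity $\Phi(r;u,x^0)=W(r;u,x^0)-\frac{2\alpha(r)\log r}{(4\log r-1)(n+2)}\int_{\partial B_1}u_r\,d\mathcal{H}^{n-1}$, whose limit vanishes as $r\to 0+$ precisely because of Lemma \ref{property} together with the $2$-homogeneity identity $\frac{1}{n+2}\int_{\partial B_1}u_0\,d\mathcal{H}^{n-1}=\int_{B_1}u_0\,dx$; an almost-monotonicity statement $\Phi(r;u,x^0)\geq\int_0^r P(s;u,x^0)\,ds$ obtained by differentiating $\Phi$ as in \eqref{eq4.8}, with $P$ integrable; and finally a leading-order comparison in which the correction $\frac{1}{r(1-2\log r)^{1+\gamma}}$ subtracted in the definition \eqref{Q} of $\overline{Q}$ is exactly what makes $-\overline{Q}$ positive and dominant over the sign-indefinite residue. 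This is the entire reason $\overline{Q}$ carries that correction. Without this argument (or a substitute for it) your differential inequality is unproven and the ODE you integrate does not close, so both \eqref{eq4.3} and everything downstream of it remain unestablished.
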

	
	What we need to note here is that, when $\mathcal{C}_r(x)$ exhibits homogeneity, $\log \mathcal{C}_r(x)$ does not share this property. This is unlike the case discussed by Weiss in \cite{w99,w00,asu}, and it also contradicts the convexity of $F(u)$ as discussed by the authors in \cite{dz}. Therefore, we need to handle the terms with logarithmic carefully. Since our ultimate aim is to achieve uniqueness in the context of blow-up limit, we make some adjustments to the energy $W(0+;u,x^0)$. First, we will prove the decay rate of 
	$$\overline{W}(r;u,x^0):=W(r;u,x^0)-\int_{0}^{r}\overline{Q}(s;u,x^0)ds,$$
	where $	\overline{Q}(r;u,x^0)=Q(r;u,x^0)-\displaystyle\frac{1}{r(1-2\log r)^{1+\gamma}},$ for any $\gamma\in(0,1)$ as in \eqref{Q}. For more information about $\overline{Q}(r;u,x^0)$, we refer to Remark \ref{rem3.2} and Remark \ref{rem3.3}.

	\begin{remark}
		$\overline{W}(r;u,x^0)$ is indeed monotonically increasing related to $r$, thus the difference $\left|\overline{W}(r;u,x^0)-\overline{W}(0+;u,x^0) \right|$ can be written as $\overline{W}(r;u,x^0)-\overline{W}(0+;u,x^0) $ for any $r>0$.
	\end{remark}
		In fact, monotonicity formula $\displaystyle\frac{d}{dr}W(r;u,x^0)-Q(r;u,x^0)\geq 0$ implies that $\displaystyle\frac{d}{dr}\overline{W}(r;u,x^0)=\displaystyle\frac{d}{dr}W(r;u,x^0)-\overline{Q}(r;u,x^0)\geq 0$, which shows that $\overline{W}(r;u,x^0)$ is monotonically increasing and
	$$\overline{W}(r;u,x^0)\geq \overline{W}(0+;u,x^0).$$

	Our research approach is greatly influenced by the renowned work \cite{w99}, particularly the establishment of the epiperimetric inequality. The subsequent sections of this paper will be organized as follows.

	In Section 2, we will discuss the Weiss-type monotonicity formula with variable parameter, which helps us understand the homogeneity of blow-up solutions. Subsequently, we will employ a proof by contradiction to introduce the important tool "epiperimetric inequality" (Section 3), leading to the uniqueness of blow-up solutions in Section 4. Finally, in Section 5, we will establish the H\"older continuity of the normal vectors $\nu$ at free boundary points, thereby proving the main theorem \ref{regularity}.

	\vspace{25pt}
	\section{Weiss-type monotonicity formula with variable parameter}

	In this section, we will introduce Weiss-type monotonicity formula, which was introduced firstly in the elegant work \cite{w98} by Weiss to deal with the classical obstacle problem, and then it has a wide range of applications to the obstacle problem such as in \cite{w99} and  \cite{w01}. In addition, the Weiss-type monotonicity formula for Bernoulli-type free boundary problems also plays an important role, as shown in \cite{vw12}. One of key ingredients in this paper is that the monotonicity formula gives the existence of $\displaystyle\lim_{r\to 0+} W(r;u,x^0)$. In fact, the energy $W(r;u,x^0)$ with variable parameter $\alpha(r)$ is also not monotone with respect to $r$, however the variable parameter $\alpha(r)$ helps us that $\displaystyle\frac{dW(r;u,x^0)}{dr}$ can be divided into the following two terms, namely,
	
	\begin{equation*}
		 \frac{dW(r;u,x^0)}{dr}= K(r;u,x^0)+Q(r;u,x^0),
	\end{equation*} 
	where $K(r;u,x^0)$ is a non-negative term (perfect square term) and $Q(r;u,x^0)$ is integrable term.

	\begin{lemma} {(Weiss-type monotonicity formula) }\label{monotonicity}
		Let $u$ be a solution of \eqref{eq1.1} in $B_{r_0}(x^0)$, then there holds
		\begin{equation*}
			\frac{dW(r;u,x^0)}{dr}=\frac{\alpha (r)}{r}\int_{\partial B_1} \left(\nabla u_r\cdot x -\frac{2}{\alpha(r)}u_r\right)^2d\mathcal{H}^{n-1}+Q(r;u,x^0),
		\end{equation*}
		where
		$\alpha(r)=1-\displaystyle\frac{1}{2\log r}$, and
		\begin{equation*}
			\begin{aligned}
				Q(r;u,x^0)=&\frac{1}{2r(\log r)^{2}}\int_{B_1} \frac{1}{2}|\nabla u_r|^2 +\frac{u_r}{1-2\log r}\left[-\log \left(u_r r^2(1-2\log r)\right)+1\right] dx\\ &+\left(1-\frac{1}{2\log r}\right)\int_{B_1}\frac{2 u_r}{r(1-2\log r)^2}\left[-\log \left(u_r(1-2\log r)\right)+1\right] dx.
			\end{aligned}
		\end{equation*}
		
	\end{lemma}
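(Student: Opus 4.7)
The proof is a direct computation of $\frac{d}{dr}W(r;u,x^0)$ along the classical Weiss monotonicity template, with extra care taken for the two $r$-dependent factors that break scaling: the variable parameter $\alpha(r)$ and the logarithmic potential $F$. My first step is to pass from $B_r(x^0)$ to $B_1$ via the rescaling $u_r(x)=u(x^0+rx)/\mu(r)$ with $\mu(r)=r^2(1-2\log r)$. A direct change of variables, using $\mu(r)^2 r^{n-2}=r^{n+2}(1-2\log r)^2$ for the Dirichlet piece and $\mu(r)/r^2=1-2\log r$ for the potential piece, collapses $W$ to exactly the functional $M(r;u_r)$ introduced in \eqref{M(r;v)}:
\begin{equation*}
W(r;u,x^0)=\alpha(r)\int_{B_1}\Bigl(\tfrac12|\nabla u_r|^2+G(r;u_r)\Bigr)\,dx-\int_{\partial B_1}u_r^2\,d\mathcal{H}^{n-1},
\end{equation*}
with $G(r;v)=\frac{v}{1-2\log r}\bigl[-\log(vr^2(1-2\log r))+1\bigr]$.

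Next I compute the $r$-derivative of $u_r$. From $u(x^0+rx)=\mu(r)u_r(x)$ and the defining relation $\alpha(r)=2\mu(r)/(r\mu'(r))$, one finds
\begin{equation*}
\partial_r u_r(x)=\tfrac{1}{r}\Bigl(\nabla u_r(x)\cdot x-\tfrac{2}{\alpha(r)}u_r(x)\Bigr),
\end{equation*}
which is the key identity that will eventually produce the perfect square on $\partial B_1$. The Euler--Lagrange equation $\Delta u=-\log u$ on $\{u>0\}$ rescales to $\Delta u_r=\partial_v G(r;u_r)$ on $\{u_r>0\}\cap B_1$; this identity is what will cancel all interior terms containing $\partial_r u_r$.

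Now I differentiate $W(r;u,x^0)$ term by term. Differentiating $\alpha$ out of the interior integral gives $\alpha'(r)\int_{B_1}[\tfrac12|\nabla u_r|^2+G(r;u_r)]dx$; since $\alpha'(r)=1/(2r(\log r)^2)$, this is exactly the first summand of $Q(r;u,x^0)$. The chain rule on the remaining $\alpha(r)$-weighted interior integral produces three terms, namely $\nabla u_r\cdot\nabla(\partial_r u_r)$, $\partial_v G(r;u_r)\partial_r u_r$, and $\partial_r G(r;u_r)$. Integrating the first by parts on $B_1$ and using the rescaled equation $\Delta u_r=\partial_v G(r;u_r)$ kills the interior $\partial_r u_r$ contributions and leaves only the boundary integral $\int_{\partial B_1}(\nabla u_r\cdot x)\partial_r u_r\,d\mathcal{H}^{n-1}$. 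Combining it with the derivative of the boundary term $-2\int_{\partial B_1}u_r\partial_r u_r\,d\mathcal{H}^{n-1}$ yields $\int_{\partial B_1}[\alpha(r)\nabla u_r\cdot x-2u_r]\partial_r u_r\,d\mathcal{H}^{n-1}$, and substituting the identity for $\partial_r u_r$ reduces this to the announced $\frac{\alpha(r)}{r}\int_{\partial B_1}(\nabla u_r\cdot x-\tfrac{2}{\alpha(r)}u_r)^2\,d\mathcal{H}^{n-1}$.

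The main obstacle is to identify $\alpha(r)\int_{B_1}\partial_r G(r;u_r)\,dx$ with the second summand of $Q$. A naive differentiation of $G(r;v)$ in $r$ produces a messy combination of $\log v$, $\log r$, and $\log(1-2\log r)$, but the algebraic identity $-2\log r=(1-2\log r)-1$ lets one absorb the $2\log r$ inside the logarithm and collapses the derivative to the clean form $\partial_r G(r;v)=\frac{2v}{r(1-2\log r)^2}\bigl[-\log(v(1-2\log r))+1\bigr]$, which is precisely the second integrand of $Q$. A minor technicality is that the integration by parts is formally carried out on $\{u_r>0\}\cap B_1$; the contribution along the free boundary $\partial\{u_r>0\}$ vanishes because $u_r=|\nabla u_r|=0$ there, by the $C^{1,\log}$ regularity from \cite{qs17}, and the log singularity in $G$ is integrable since $v\log v\to 0$ as $v\to 0^+$.
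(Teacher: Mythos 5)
Your proposal is correct and follows essentially the same route as the paper's proof: rescale $W$ to $\alpha(r)\int_{B_1}\bigl(\tfrac12|\nabla u_r|^2+G(r;u_r)\bigr)dx-\int_{\partial B_1}u_r^2\,d\mathcal{H}^{n-1}$, differentiate in $r$ using $\partial_r u_r=\tfrac1r\bigl(\nabla u_r\cdot x-\tfrac{2}{\alpha(r)}u_r\bigr)$, integrate by parts and cancel the interior terms via the rescaled equation $\Delta u_r=\partial_v G(r;u_r)$, leaving the boundary perfect square plus $Q=\alpha'\int_{B_1}\bigl(\tfrac12|\nabla u_r|^2+G\bigr)dx+\alpha\int_{B_1}\partial_r G\,dx$, with your formula for $\partial_r G$ agreeing with the paper's $G_1$. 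Your closing remark on the vanishing contribution along $\partial\{u_r>0\}$ is a minor technical refinement that the paper leaves implicit.
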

		 \begin{remark}
		We notice that the variable parameter $\alpha(r)$ is introduced here,
		\begin{equation*}
			\alpha(r)=1-\displaystyle\frac{1}{2\log r}\to  1\quad \text{as} \quad r\to 0+,
		\end{equation*}
		 this finding tells us that if
		\begin{equation*}
			\displaystyle\lim_{r\to 0+} \left(\nabla u_r(x)\cdot x-\frac{2}{\alpha(r)} u_r(x)\right) =0\quad\text{a.e. in}\quad B_{\frac{1}{r}}(0),
		\end{equation*}
		it also implies that blow-up limit $u_0$ is a homogeneous function of degree 2, namely,
		\begin{equation*}
				\displaystyle\nabla u_0(x)\cdot x-2 u_0(x) =0\quad\text{a.e. in}\quad \mathbb{R}^n.
		\end{equation*}
	\end{remark}
 
\begin{remark}
	However, the variable parameter $\alpha(r)$ assists us to construct the perfect square terms (see \eqref{W'}) directly. Although it generates additional terms, fortunately, a careful analysis gives that these extra terms $Q(r;u,x^0)$ are integrable near $r\to0$. Hence, we can obtain the homogeneity of the blowup limit $u_0$ in this situation.
\end{remark}

	\begin{proof}
		
		In this proof, we proceed through direct computation to find the derivative of the energy $W(r;u,x^0)$. To accomplish this, we first perform a coordinate transformation, which allows us to simplify the expression and facilitate the calculation of the derivative,
		\begin{align*}
			W(r;u,x^0)=\alpha(r) \int_{ B_1}\frac{1}{2}|\nabla u_r|^2+{G}(r;u_r) dx-\int_{\partial B_1} u_r^2 d\mathcal{H}^{n-1}.
		\end{align*}
	Here,
		\begin{align*}
			{G}(r;v)=\displaystyle\frac{v}{1-2\log r}\left[-\log \left(vr^2(1-2\log r)\right) +1\right],   \quad\text{for}\quad v\geq0.
		\end{align*}

		Next, we proceed directly to find the derivative, which yields
		
       \begin{align}\label{W'1}
			\frac{dW(r;u,x^0)}{dr}\nonumber
			=&\alpha'(r)\int_{ B_1}\frac{1}{2}|\nabla u_r|^2+{G}(r;u_r) \ dx+\alpha(r)\int_{ B_1}\nabla u_r\frac{d}{dr}\left(\nabla u_r\right)dx\\
			&+\alpha(r)\int_{ B_1}\frac{d}{dr}{G}(r;u_r)dx-\int_{\partial B_1}2u_r\frac{du_r}{dr}d\mathcal{H}^{n-1}.
		\end{align}

		Note that,
		\begin{itemize}
			\item $\alpha'(r)=\displaystyle\frac{1}{2r(\log r)^2}$;
			\\[3pt]
			\item A direct calculation gives that \begin{align*}
					\displaystyle\frac{du_r(x)}{dr}&=\frac{d}{dr}\left(\frac{u(rx+x^0)}{r^2(1-2\log r)}\right)\\
					&=\frac{\nabla u_r(x)\cdot x}{r}-u_r(x)\frac{\left(r^2(1-2\log r)\right)'}{r^2(1-2\log r)}\\
			     &=\frac{1}{r}\left[\nabla u_r\cdot x-\frac{2}{\alpha(r)}u_r(x)\right];
			     \end{align*}
	 
			\item 
			For ${G}(r;u_r) \not\equiv0$, one has
			$$\displaystyle\frac{d}{dr}{G}(r;u_r)=\frac{2u_r}{r(1-2\log r)^2}\left[-\log\left(u_r(1-2\log r)\right)+1\right]+\Delta u_r\frac{d u_r}{dr}.$$
		
			Indeed, for ${G}(r;u_r)\not\equiv 0$, it can be yielded that
			$$\displaystyle\frac{d}{dr}{G}(r;u_r)=G_1(r;u_r)+G_2(r;u_r)\frac{du_r}{dr},$$
			where
			\begin{equation*}
					G_1(r;u_r):=\frac{\partial}{\partial r}{G}(r;u_r)=\frac{2u_r}{r(1-2\log r)^2}\left[-\log\left(u_r(1-2\log r)\right)+1\right],
			\end{equation*}
			and
			\begin{equation*}
				G_2(r;u_r):=\displaystyle\frac{\partial}{\partial u_r}{G}(r;u_r)
				=\frac{1}{1-2\log r}\left[-\log\left(u_rr^2(1-2\log r)\right)\right].
			\end{equation*}
			Notice that, in $\{u_r>0\}$,
			\begin{equation}\label{u_r}
				\Delta u_r=\frac{\Delta u(x^0+rx)}{1-2\log r}=\frac{-\log u(x^0+rx)}{1-2\log r}=\frac{ -\log\left(u_rr^2(1-2\log r)\right)}{1-2\log r}=G_2(r;u_r).
			\end{equation}
		
		\end{itemize}
		
		We then substitute the above facts into the equation \eqref{W'1} for $\displaystyle\frac{dW(r;u,x^0)}{dr}$ to obtain,

			\begin{align}\label{W'}
				\frac{dW(r;u,x^0)}{dr}=&\alpha(r)\int_{ B_1}-\Delta u_r \frac{d u_r}{dr}dx+\alpha(r) \int_{\partial B_1} \nabla u_r \cdot x \frac{d u_r}{dr} d\mathcal{H}^{n-1}+\alpha(r)\int_{ B_1}G_1(r;u_r)dx\nonumber\\
				&+\alpha(r)\int_{ B_1}\Delta u_r\frac{d u_r}{dr}dx+\alpha'(r)\int_{ B_1}\frac{1}{2}|\nabla u_r|^2+{G}(r;u_r)dx-2\int_{\partial B_1} u_r \frac{d u_r}{dr}d\mathcal{H}^{n-1}\nonumber\\
				=&\int_{\partial B_1}\frac{d u_r}{dr}\left(\alpha (r) \nabla u_r\cdot x -2 u_r\right)\mathcal{H}^{n-1}+\alpha'(r)\int_{ B_1}\frac{1}{2}|\nabla u_r|^2+{G}(r;u_r)dx\nonumber\\
				&+\alpha(r)\int_{ B_1}G_1(r;u_r)dx\nonumber\\
				=&\frac{\alpha (r)}{r}\int_{\partial B_1} \left(\nabla u_r\cdot x -\frac{2}{\alpha(r)}u_r\right)^2 d\mathcal{H}^{n-1}+Q(r;u,x^0),
			\end{align}
		where 

			\begin{align*}
				Q(r;u,x^0)=&\alpha'(r)\int_{ B_1}\frac{1}{2}|\nabla u_r|^2+{G}(r;u_r)dx+\alpha(r)\int_{ B_1}G_1(r;u_r) dx\\
				=&\frac{1}{2r(\log r)^{2}}\int_{B_1} \frac{1}{2}|\nabla u_r|^2 +\frac{u_r}{1-2\log r}\left[-\log \left(u_r r^2(1-2\log r)\right)+1\right] dx\\ &+\left(1-\frac{1}{2\log r}\right)\int_{B_1}\frac{2 u_r}{r(1-2\log r)^2}\left[-\log \left(u_r(1-2\log r)\right)+1\right] dx.
			\end{align*}

	\end{proof}
	
	\begin{remark}[Integrability of $Q(r;u,x^0)$]\label{q(r)}
			We recall Theorem \ref{lem1.1.} concerning the growth estimate and non-degeneracy of the minimizers, which shows that 
			\begin{align*}
				|Q(r;u,x^0)|\leq  C\frac{\log(-\log r)}{r(\log r)^2},
			\end{align*}
			furthermore,
	       \begin{align*}
           \int_{0}^{r}\frac{\log(-\log s)}{s(\log s)^2}ds\to 0\qquad\text{as}\quad r\to 0+.
	       \end{align*}
			Hence, we can conclude that $Q(r;u,x^0)$ is integrable.
	\end{remark}

		Once Lemma \ref{monotonicity} is established, one can obtain the convergence of energy as follows.
		
		\begin{lemma}\label{property} Let $u$ be the minimizer of \eqref{eq1.0} and $x^0\in\mathscr{F}(u)$. Then the following conclusions hold true,

			$\mathrm{(1)}$ The limit $W(0+;u,x^0)$ exists and is finite.
			
			$\mathrm{(2)}$ Let $0<r_m\rightarrow 0$ be a sequence such that the blow-up sequence
			$${u}_m(x)=\frac{u(x^0+r_mx)}{r_m^2(1-2\log r_m)},$$
			converges weakly in $W^{1,2}_{loc}(\mathbb{R}^n)$ to $u_0(x)$. Then ${u}_0(x)$ is a homogeneous function of degree 2.
			Moreover,
			$$W(0+;u,x^0)=\int_{B_1} \frac{1}{2} u_0(x) dx\geq0.$$
			Additional, $W(0+;u,x^0)=0$ implies that $u\equiv0$ in $B_{\delta}(x^0)$ for some $\delta>0$.
			
			$\mathrm{(3)}$ The function $x\longmapsto W(0+;u,x)$ is upper-semicontinuous.
			
		\end{lemma}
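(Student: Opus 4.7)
The plan is to handle the three parts in turn, using the Weiss-type monotonicity formula of Lemma~\ref{monotonicity} as the common engine. Split the derivative
\[
\frac{d}{dr}W(r;u,x^0) = K(r;u,x^0) + Q(r;u,x^0),\qquad K(r;u,x^0):=\frac{\alpha(r)}{r}\int_{\partial B_1}\Bigl(\nabla u_r\cdot x-\tfrac{2}{\alpha(r)}u_r\Bigr)^{2}d\mathcal{H}^{n-1}\geq 0,
\]
and set $\overline{W}(r;u,x^0):=W(r;u,x^0)-\int_0^r Q(s;u,x^0)\,ds$. By Remark~\ref{q(r)}, $Q$ is integrable on $(0,r_0)$, so $\overline{W}(\cdot;u,x^0)$ is non-decreasing in $r$. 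For part (1), I would use the growth estimate of Theorem~\ref{lem1.1.} to bound $u_r$ uniformly in $L^\infty(B_1)$ for small $r$, which controls $W(r;u,x^0)$ from above on a neighbourhood of $0$. Together with monotonicity of $\overline{W}$ and boundedness of $\int_0^r Q$, this yields $\overline{W}(0+)=\inf_{r>0}\overline{W}(r)$ finite, and hence $W(0+;u,x^0)$ exists and is finite.

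For part (2), the integrability of $K$ on $(0,r_0)$ forces, after the change of variable $r=r_m t$ on any fixed annulus $R_1\leq t\leq R_2$,
\[
\int_{R_1}^{R_2} \frac{\alpha(r_m t)}{t}\int_{\partial B_1}\Bigl(\nabla u_{r_m t}\cdot x-\tfrac{2}{\alpha(r_m t)}u_{r_m t}\Bigr)^2 d\mathcal{H}^{n-1}\,dt \longrightarrow 0.
\]
The relation $u_{r_m t}(x)=t^{-2}(1-2\log r_m)(1-2\log(r_m t))^{-1}u_m(tx)$ has a prefactor tending to $1$, and the equation $\Delta u_m=G_2(r_m;u_m)$ has bounded right-hand side (tending to $\chi_{\{u_0>0\}}$ as recalled in the introduction). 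Elliptic regularity then upgrades the weak convergence $u_m\rightharpoonup u_0$ to strong $C^{1,\alpha}_{loc}$ convergence. Passing to the limit using weak lower semicontinuity of the $L^2$-norm and substituting $y=tx$ yields $\nabla u_0(y)\cdot y-2u_0(y)=0$ a.e., i.e.\ $u_0$ is $2$-homogeneous.

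For the formula, let $r_m\to 0$ directly in
\[
W(r_m;u,x^0)=\alpha(r_m)\int_{B_1}\Bigl(\tfrac12|\nabla u_m|^2+G(r_m;u_m)\Bigr)dx-\int_{\partial B_1}u_m^2\,d\mathcal{H}^{n-1}.
\]
Using $\alpha(r_m)\to 1$, $-2\log r_m/(1-2\log r_m)\to 1$, $\log u_m/(1-2\log r_m)\to 0$, and strong convergence, I obtain $G(r_m;u_m)\to u_0$, so that $W(0+;u,x^0)=\int_{B_1}(\tfrac12|\nabla u_0|^2+u_0)\,dx-\int_{\partial B_1}u_0^2\,d\mathcal{H}^{n-1}$. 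Integration by parts using the Euler identity $\nabla u_0\cdot x=2u_0$ on $\partial B_1$ together with $u_0\Delta u_0=u_0\chi_{\{u_0>0\}}=u_0$ gives $\int_{B_1}|\nabla u_0|^2=2\int_{\partial B_1}u_0^2-\int_{B_1}u_0$, which collapses the expression to $\tfrac12\int_{B_1}u_0\,dx\geq 0$. Finally, if this quantity vanishes, then $u_0\equiv 0$; the non-degeneracy from Theorem~\ref{lem1.1.} would give $\sup_{B_1}u_m\gtrsim C_1/2$ if $x^0\in\mathscr{F}(u)$, contradicting the strong convergence $u_m\to 0$. Since $u$ is continuous with $u(x^0)=0$, the only remaining possibility is that $x^0$ lies in the interior of $\{u=0\}$, yielding $u\equiv 0$ in some $B_\delta(x^0)$.

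For part (3), the uniform bound from Remark~\ref{q(r)} (the constants in the growth/non-degeneracy estimates are uniform on compact subsets of $\Omega$) gives $\int_0^r Q(s;u,x)\,ds\to 0$ as $r\to 0^+$ uniformly in $x$, so $W(0+;u,x)=\inf_{r>0}\overline{W}(r;u,x)$. For each fixed $r>0$, the map $x\mapsto\overline{W}(r;u,x)$ is continuous since it amounts to integrating $W^{1,2}$ quantities over the ball $B_r(x)$, which depends continuously on $x$. The infimum of a family of continuous functions is upper semicontinuous, establishing (3). The main obstacle is in part (2): controlling the non-polynomial scaling correction $(1-2\log r_m)/(1-2\log(r_m t))$ uniformly on $[R_1,R_2]$ and extracting convergence strong enough to pass to the limit inside the squared gradient on spheres $\partial B_1$.
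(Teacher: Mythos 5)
Your proposal is correct and follows essentially the same route as the paper's own proof: the decomposition $\frac{d}{dr}W=K+Q$ with $\overline{W}(r)=W(r)-\int_0^r Q$ monotone and the growth/non-degeneracy estimates for finiteness, the annulus change-of-variables argument forcing $\nabla u_0\cdot x-2u_0=0$, the integration by parts (using $\Delta u_0=\chi_{\{u_0>0\}}$ and $2$-homogeneity) giving $W(0+)=\tfrac12\int_{B_1}u_0\,dx$, non-degeneracy for the vanishing case, and upper semicontinuity as an infimum in $r$ of functions continuous in the center point. The only cosmetic differences are that you make the strong-convergence step explicit via elliptic regularity (the paper uses it implicitly when passing to the limit in the Dirichlet energy and boundary terms), where one should note that the rescaled right-hand side is uniformly in $L^p_{\mathrm{loc}}$ rather than genuinely bounded, and that in part (1) the growth estimate is what bounds the boundary term and hence bounds $W$ from \emph{below}, which is the direction needed for the monotone limit to be finite.
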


		\begin{proof}(1) Since the additional term $Q(r;u,x^0)$ in the monotonicity formula (Lemma \ref{monotonicity}) is integrable with respect to $r$ when $r\in(0, r_0)$, the function $\int^r_0Q(s;u,x^0)ds$ is well-defined. Thus, we obtain that
			$$\frac{d}{dr}\left(W(r;u,x^0)-\int_0^r Q(s;u,x^0)ds\right)\geq 0,$$ 
		it gives that $\displaystyle\lim_{r\to 0+}\left(W(r;u,x^0)-\int_0^r Q(s;u,x^0)ds\right)$ exists. Moreover, thanks to the facts of the non-degeneracy and growth estimate of minimizers in Theorem \ref{lem1.1.}, we infer that the function $W(r;u,x^0)$ has finite right limit $W(0+;u,x^0)$.

			(2) Noticing that the sequence ${u_m}$ is bounded in $W^{1,2}_{loc}(\mathbb{R}^n)$, and the limit $W(0+;u,x^0)$ is finite. By applying the monotonicity formula stated in Lemma \ref{monotonicity}, it can be seen that
		 
				\begin{align*}
					\frac{dW(r;u,x^0)}{dr}=&\frac{\alpha (r)}{r}\int_{\partial B_1} \left(\nabla u_r\cdot x -\frac{2}{\alpha(r)}u_r\right)^2d\mathcal{H}^{n-1}+Q(r;u,x^0)\\
					=&\frac{\alpha(r)}{r^{n+2}(1-2\log r)^2}\int_{\partial B_r(x^0)}\left(\nabla u\cdot \nu-\frac{2}{\alpha(r)}\frac{u}{r}\right)^2 d\mathcal{H}^{n-1}+Q(r;u,x^0),
				\end{align*}
		 where $\nu$ is the unit outward normal vector on the boundary $\partial B_r(x^0)$.

			 For any $0<\rho<\sigma<r_0$, we obtain that
			\begin{align*}
				&W(\sigma;u,x^0)-W(\rho;u,x^0)\\
				=&\int_{\rho}^{\sigma}\frac{\alpha(r)}{r^{n+2}(1-2\log r)^2}\int_{\partial B_r(x^0)}\left(\nabla u\cdot \nu -\frac{4\log r}{2\log r -1}\frac{u}{r}\right)^2d\mathcal{H}^{n-1}dr+\int_{\rho}^{\sigma}Q(r;u,x^0)dr.
			\end{align*}
	
	        Furthermore, for positive sequence $r_m$,

	        	\begin{align}\label{order-q}
	        		&W(r_m\sigma;u,x^0)-W(r_m\rho;u,x^0)\nonumber\\
	        		=&\int_{r_m\rho}^{r_m\sigma}\frac{\alpha(r)}{r^{n+2}(1-2\log r)^2}\int_{\partial B_r(x^0)}\left(\nabla u\cdot \nu -\frac{4\log r}{2\log r -1}\frac{u}{r}\right)^2d\mathcal{H}^{n-1 } dr
	        		+\int_{r_m\rho}^{r_m\sigma}Q(r;u,x^0)dr\nonumber\\
	        		=&\int_{B_{r_m\sigma}(x^0)\backslash B_{r_m\rho}(x^0)}\frac{\alpha(|x-x^0|)}{|x-x^0|^{n+2}(1-2\log |x-x^0|)^2}
	        		\left(\nabla u(x)\cdot \frac{x-x^0 }{|x-x^0|}-\frac{4\log |x-x^0|}{2\log |x-x^0| -1}\frac{u}{|x-x^0|}\right)^2dx\nonumber\\
	        		&+\int_{r_m\rho}^{r_m\sigma}Q(r;u,x^0)dr \nonumber\\
	        		=&\int_{B_{\sigma}(0)\backslash B_{\rho}(0)}r_m^n\frac{\alpha(|r_mz|)}{|r_mz|^{n+2}(1-2\log |r_m z|)^2}
	        		\left(\nabla u(r_mz+x^0)\cdot \frac{z}{|z|}-\frac{4\log |r_mz|}{2\log |r_mz| -1}\frac{u}{|r_mz|}\right)^2dz\nonumber\\
	        		&+\int_{r_m\rho}^{r_m\sigma}Q(r;u,x^0)dr \nonumber\\
	        		=&\int_{B_{\sigma}(0)\backslash B_{\rho}(0)}
	        		\frac{(1-2\log r_m)^2}{|z|^{n+4}}\frac{\alpha(|r_mz|)}{(1-2\log |r_m z|)^2}\left(\nabla u_{r_m}(z)\cdot z -\frac{2}{\alpha(|r_m z|)}u_{r_m}(z)\right)^2dz\nonumber\\
	        		&+\int_{r_m\rho}^{r_m\sigma}Q(r;u,x^0)dr.
	        	\end{align}
	
	       Furthermore, one has

	       	\begin{align*}
	       		&W(r_m\sigma;u,x^0)-W(r_m\rho;u,x^0)-\int_{r_m\rho}^{r_m\sigma}Q(r;u,x^0)dr\\
	       		=&\int_{B_{\sigma}(0)\backslash B_{\rho}(0)}
	       		\frac{1}{|z|^{n+4}}\frac{(1-2\log r_m)^2}{2\log |r_m z|\left(2\log |r_mz|-1\right)}\left(\nabla u_{r_m}(z)\cdot z -\frac{2}{\alpha(|r_m z|)}u_{r_m}(z)\right)^2dz\geq&0,
	       	\end{align*}
	       		for $ r_m<r_0\ll 1.$ And on the other hand, due to the facts that $\displaystyle \lim_{r\to 0+} W(r;u,x^0)$ exists and $Q(r;u,x^0)$ is integrable, we obtain that
	       $$W(r_m\sigma;u,x^0)-W(r_m\rho;u,x^0)-\int_{r_m\rho}^{r_m\sigma}Q(r;u,x^0)dr\to 0\qquad\text{as}\quad m\to \infty. $$
	Thanks to the following fact
	\begin{align*}
		\frac{(1-2\log r_m)^2}{2\log |r_m z|\left(2\log |r_mz|-1\right)}\to 1,\qquad\text{as} \quad m\to \infty, 
	\end{align*}
that we can arrive 
	\begin{align}\label{order}
		\displaystyle\lim_{m\to \infty} \int_{B_{\sigma}(0)\backslash B_{\rho}(0)}
		\frac{1}{|z|^{n+4}}\left(\nabla u_{r_m}(z)\cdot z -\frac{2}{\alpha(r_m z)}u_{r_m}(z)\right)^2dz= 0.
	\end{align}
	Observe that $\lim_{m\to \infty}\alpha(r_m z)=1$, we obtain that 
	\begin{align*}
		\lim_{m\to \infty }\frac{2}{\alpha(r_m z)}=2,
	\end{align*}
	and then $\nabla u_0(z)\cdot z -2 u_0(z) =0$ for any $z\in\mathbb{R}^n$.
Therefore, this yields the desired homogeneity of $u_0$ (2-homo) by the fact that $u_m$ converges to $u_0$ weakly in $W^{1,2}_{\text{loc}}(\mathbb{R}^n)$.

			Moreover, the homogeneity of ${{u}}_0$ gives that
		
				\begin{align}\label{eq2.4}
					\displaystyle\lim_{r\rightarrow 0+}W(r;u,x^0)
					=&\displaystyle\lim_{r\rightarrow 0+}\alpha(r)\int_{B_1}\frac{1}{2}|\nabla u_r|^2+{G}(r;u_r) dx -\int_{\partial B_1}u_r^2 d\mathcal{H}^{n-1}\nonumber\\
					=&\int_{B_1}\left(\frac{1}{2}|\nabla u_0|^2+u_0 \right)dx-\int_{\partial B_1}u_0^2d\mathcal{H}^{n-1}\nonumber\\
					=&\int_{B_1}\left(-\frac{1}{2}\Delta u_0 u_0+u_0\right) dx+\int_{\partial B_1}\frac{1}{2}\left(\nabla u_0\cdot \nu-2 u_0\right) u_0 d\mathcal{H}^{n-1}\nonumber\\
					=&\frac{1}{2}\int_{B_1}  u_0 dx\geq 0.
				\end{align}

			Additionally, it should be noted that of $W(0+;u,x^0)=0$, it is easy to obtain $u= 0$ in a small ball $B_{\delta}(x^0)$ for some $\delta>0$.
			
			(3) Although $W(r;u,x^0)$ loses monotonicity respect to $r$, $W(r;u,x^0)-\int_0^r Q(s;u,x^0)ds$ is monotonically increasing respect to $r$. Furthermore, since $Q(r;u,x^0)$ is integrable with respect to $r$, we can refer to \cite[Lemma 4.4 (v)]{vw12} to obtain that for each $\delta>0$,
			
			\begin{align*}
				W(0+;u,x^0)\leq W(r;u,x^0)-\int_0^r Q(s;u,x^0)ds \leq W(r;u,x^0)-\frac{\delta}{2}\leq W(0+;u,x^0)-\delta,
			\end{align*}	
			provided that we choose for fixed $x^0$ first $r>0$ and then $|x-x^0|$ small enough.
		The last inequality holds here due to our utilization of the absolute continuity of the Lebesgue integral. These imply the upper-semicontinuity of $W(0+;u,x^0)$ related to $x^0$.
		\end{proof}
		
		\begin{remark}\label{r2.8}
		Consider the governing equation for
		$u_r$ in $\{u_r>0\}$,
		\begin{align*}
			\Delta u_r=\frac{-\log u(x^0+rx)}{1-2\log r}=\frac{-\log \left(u_r\mu(r)\right)}{1-2\log r},\quad(\text{see} \eqref{u_r})
		\end{align*}
		where $\mu(r)=r^2(1-2\log r)$. If $u_r\to u_0$ weakly in $W^{1,2}(\Omega)$ as $r\to 0+$, taking limit $r\to 0+$ gives the property (2) homogeneity of the blow-up limits $u_0$, which implies that $u_0$ satisfies the following equation
			\begin{equation}\label{eq2.5}
			\Delta u_0=\chi_{\{u_0>0\}}\qquad\text{in}\quad \mathbb{R}^n.
			\end{equation}
			In fact, $\Delta u_0=0$ in $\{u_0=0\}$ is evident. This is nothing but the classical obstacle problem in whole domain, therefore, this finding greatly aids us in discussing the properties of blow-up limits as we do in Section 4.
		\end{remark}

		\vspace{10pt}
		\section{Epiperimetric inequality}
		This section is dedicated to show the \textit{epiperimetric inequality} which plays a crucial role in indicating an energy decay estimate and determining the uniqueness of the blow-up limit when analyzing the regularity of the free boundary. Firstly, let us observe the properties of the two integrable terms $Q(r;u,x^0)$ and $\overline{Q}(r;u,x^0)$. Recalling that $Q(r;u,x^0)$ as mentioned in Lemma \ref{monotonicity}, we check at once that Theorem \ref{lem1.1.}, which yields that $|Q(r;u,x^0)|\leq C\displaystyle\frac{-\log(-\log r)}{r(\log r)^2}$ for $0<r\ll 1$, moreover,
		$$	\overline{Q}(r;u,x^0)=Q(r;u,x^0)-\displaystyle\frac{1}{r(1-2\log r)^{1+\gamma}},\qquad\text{for any}\quad \gamma\in(0,1),$$ 
		these give that following properties.
		\begin{remark}\label{rem3.2}\ It's easy to check the following properties of integrable terms $Q(r;u,x^0)$ and $\overline{Q}(r;u,x^0)$.
			 
			\begin{itemize}
				\item $\displaystyle\lim_{r\to 0+}Q(r;u,x^0)=-\infty$;\quad
				 $\displaystyle\lim_{r\to 0+}\overline{Q}(r;u,x^0)=-\infty$;
				 \\[3pt]
				\item 	$\displaystyle\lim_{r\to 0+}\frac{Q(r;u,x^0)}{\frac{1}{r(1-2\log r)^{1+\gamma}}}=0$;
				\\[3pt]
				\item  $\displaystyle\lim_{r\to 0+}{\frac{Q(r;u,x^0)}{\overline{Q}(r;u,x^0)}}=0$.
			\end{itemize}
		\end{remark}
		\begin{remark}\label{rem3.3}
			The integrablity of $\overline{Q}(r;u,x^0) $ with respect to $r$ that implies that $\displaystyle\lim_{s\to 0}\int_{0}^{s}\overline{Q}(r;u,x^0)dr=0$.
		\end{remark}
		
		Secondly, noting that the boundary condition $v=\mathcal{C}$ on $\partial B_1$ shows the following energy relation for the minimizer $u$ (Remark \ref{r3.3}). This relation assists us in proving energy decay.
		\begin{remark}\label{r3.3}
			The facts that  $v=\mathcal{C}$ on $\partial B_1$ and $u$ is the minimizer of \eqref{eq1.0} gives that 
			$$M(s;u)\leq M(s;v)\quad \text{for any}\quad s\leq r_0,$$
			where $r_0$ such that $B_{r_0}(x^0)\subset \Omega$.
		\end{remark}
		
Thirdly, we present the \textit{Proof of Lemma \ref{epip}.}\ 

\textit{Proof of epiperimetric inequality \eqref{6.1}}.	We will provide the proof by contradiction. Assume that for any $ \eta\in(0,1)$ and $ \delta>0,$ there exists a non-negative function $\mathcal{C}\in W^{1,2}(B_1)$ that is homogeneous of degree 2 and some $h_{\nu}\in\mathbb{H}$ satisfying $$\|\mathcal{C}-h_{\nu}\|_{W^{1,2}(B_1)}\leq \delta,$$ such that there is $s\in (0, s_0(\delta)]$  satisfying
			\begin{equation}
				M(s;v)- M_0(h_{\nu})-\int_{0}^{s}\overline{Q}(r;u,x^0)dr > (1-\eta) \left[M(s;\mathcal{C})- M_0(h_{\nu})-\int_{0}^{s}\overline{Q}(r;u,x^0)dr\right],\nonumber
			\end{equation}
          for any $v \in \mathcal{C}+W^{1,2}_0(B_1).$
			Without loss of generality, one can assume that there exist sequences $\eta_k\rightarrow 0$, $\delta_k\rightarrow 0$, $\mathcal{C}_k\in W^{1,2}(B_1)$ and $h_{\nu_{k}}\in \mathbb{H}$ such that non-negative $\mathcal{C}_k\in W^{1,2}(B_1)$ is a homogeneous global function of degree 2 satisfying
			\begin{align}\label{h1}
				\|\mathcal{C}_k-h_{\nu_{k}}\|_{W^{1,2}(B_1)}=\inf_{h_{\nu}\in\mathbb{H}}\|\mathcal{C}_k-h_{\nu}\|_{W^{1,2}(B_1)}=\delta_k,
			\end{align}
			and $\exists \ s_k \in (0, s_0(\delta_k))$  (where $s_0(\delta)$ also satisfies $-\log s_0(\delta)<\delta^{-\frac{2}{\gamma}}$) such that
				\begin{equation}\label{eq3.2}
				M(s_k;v)- M_0(h_{\nu_{k}})-\int_{0}^{s_k}\overline{Q}(r;u,x^0)dr > (1-\eta_k) \left[M\left(s_k;\mathcal{C}_k\right)- M_0(h_{\nu_{k}})-\int_{0}^{s_k}\overline{Q}(r;u,x^0)dr\right],
			\end{equation}
			for $v\in \mathcal{C}_k+W^{1,2}_0(B_1).$
			For simplicity, rotating in $\mathbb{R}^n$ if necessary, we may assume that
			\begin{align}\label{h2}
				h_{\nu_{k}} =h_{e_n}=\frac{1}{2}\max (x_n,0)^2=:h,
			\end{align}
			it shows that $\displaystyle\chi_{\{h>0\}}=\chi_{\{x_n>0\}}$.

		According to the inequality \eqref{eq3.2}, it is easy to check that
			\begin{equation}\label{eq3.3}
				(1-\eta_k) \left[M\left(s_k;\mathcal{C}_k\right)- M_0(h)-\int_{0}^{s_k}\overline{Q}(r;u,x^0)dr\right]<M(s_k;v)- M_0(h)-\int_{0}^{s_k}\overline{Q}(r;u,x^0)dr,
			\end{equation}
			for every $v\in W^{1,2}(B_1)$ and $v=\mathcal{C}_k$ on $\partial B_1$.
			
			Since $\displaystyle\Delta h=\chi_{\{h>0\}},$ one gets
			\begin{equation}
				\int_{B_1}-\nabla h\cdot\nabla {\phi} \ dx+\int_{\partial B_1}\phi \nabla h\cdot \nu \ d\mathcal{H}^{n-1}-\int_{B_1}  {\phi}\chi_{\{h>0\}} dx=0, \quad \text{for any }\quad  {\phi}\in W^{1,2}(B_1).\nonumber
			\end{equation} 
			Then taking $\phi(x)=z(x)-h(x)$ in the above equation, and for simplicity, we denote
			\begin{equation*}
				\mathscr{A}(z):= \int_{B_1}\nabla h \cdot \nabla(z-h)\ + (z-h) \chi_{\{x_n>0\}} dx-\int_{\partial B_1}2M_0(z-h)\ d\mathcal{H}^{n-1},
			\end{equation*}
			and $\mathscr{A}(z)=0$ for any $z\in W^{1,2}(B_1)$. Then, in conjunction with the inequality \eqref{eq3.3}, this provides,
			\begin{align*}
				&(1-\eta_k)\left[M\left(s_k;\mathcal{C}_k\right)-M_0(h)-\int_{0}^{s_k}\overline{Q}(r;u,x^0)dr-\mathscr{A}(\mathcal{C}_k)\right]\\
				<&M(s_k;v)-M_0(h)-\int_{0}^{s_k}\overline{Q}(r;u,x^0)dr-\mathscr{A}(v).\nonumber
			\end{align*}
			Specifically,
				\begin{align*}
					&(1-\eta_k)\Bigg[\alpha(s_k)\int_{B_1}\frac{1}{2}|\nabla \mathcal{C}_k|^2+{G}\left(s_k;\mathcal{C}_k\right)\ dx-\int_{\partial B_1}\mathcal{C}_k^2\  d\mathcal{H}^{n-1} \\
					&-\int_{B_1}\frac{1}{2}|\nabla  h|^2+ h \ dx+\int_{\partial B_1}h^2 d\mathcal{H}^{n-1}-\int_{0}^{s_k}\overline{Q}(r;u,x^0)dr\\
					&-\int_{B_1} \nabla h\cdot \nabla(\mathcal{C}_k-h)+(\mathcal{C}_k-h)\chi_{\{x_n>0\}}\  dx
					+2\int_{\partial B_1}h\cdot(\mathcal{C}_k-h) \ d\mathcal{H}^{n-1} \Bigg]\\
					<&\alpha(s_k)\int_{B_1}\frac{1}{2}|\nabla v|^2+{G}(s_k;v)dx -\int_{\partial B_1}v^2\  d\mathcal{H}^{n-1}- \int_{B_1}\frac{1}{2}|\nabla h|^2+h\ dx +\int_{\partial B_1}h^2 d\mathcal{H}^{n-1}\\
					&-\int_{0}^{s_k}\overline{Q}(r;u,x^0)dr-\int_{B_1}\nabla h\cdot\nabla(v-h)+(v-h)\chi_{\{x_n>0\}}\ dx
					+2\int_{\partial B_1}h(v-h)d\mathcal{H}^{n-1}.
				\end{align*}
			This yields that
			\begin{equation}\label{eq3.4}
				\begin{aligned}
					&(1-\eta_k)\Bigg[ \frac{1}{2}\int_{B_1}|\nabla(\mathcal{C}_k-h)|^2 \ dx-	\int_{\partial B_1}(\mathcal{C}_k-h)^2\  d\mathcal{H}^{n-1}+\int_{B_1}{G}\left(s_k;\mathcal{C}_k\right)-\mathcal{C}_k\chi_{\{h>0\}}\ dx\\
					&-\frac{1}{2\log {s_k}}\int_{B_1}\frac{1}{2}|\nabla \mathcal{C}_k|^2+{G}\left(s_k;\mathcal{C}_k\right)\ dx -\int_{0}^{s_k}\overline{Q}(r;u,x^0)dr \Bigg] \\
					<&\int_{B_1}\frac{1}{2}|\nabla(v-h)|^2 \ dx-\int_{\partial B_1}|v-h|^2\  d\mathcal{H}^{n-1}
					+\int_{B_1}{G}(s_k;v)-v\chi_{\{h>0\}} dx\\
					&-\frac{1}{2\log {s_k}}\int_{B_1}\frac{1}{2}|\nabla v|^2+{G}(s_k;v)\ dx -\int_{0}^{s_k}\overline{Q}(r;u,x^0)dr .
				\end{aligned}
			\end{equation}

			Define $w_k:=(\mathcal{C}_k - h)/{\delta_k}$, such that $\|w_k\|_{W^{1,2}(B_1)}=1$. To establish the desired result, it is now necessary to demonstrate that $w_k$ converges strongly to $w$ in $W^{1,2}(B_1)$, with the additional condition that  $w= 0$ in $B_1$, ultimately resulting in a contradiction.
			
			We divide both sides of the inequality \eqref{eq3.4} by $\delta_k^2$, it follows from that $w_k=(\mathcal{C}_k - h)/{\delta_k}$, which shows the following inequality regarding $w_k$,
				\begin{align}\label{eq3.5}
					&(1-\eta_k)\Bigg[ \frac{1}{2}\int_{B_1}|\nabla w_k|^2 \ dx-	\int_{\partial B_1} w_k^2\  d\mathcal{H}^{n-1}+\frac{1}{\delta_k^2}\int_{B_1}{G}\left(s_k;\mathcal{C}_k\right)-\mathcal{C}_k\chi_{\{h>0\}}\ dx\nonumber\\
					&-\frac{1}{2\delta_k^2\log {s_k}}\int_{B_1}\frac{1}{2}|\nabla \mathcal{C}_k|^2+{G}\left(s_k;\mathcal{C}_k\right)\ dx -\frac{1}{\delta_k^2}\int_{0}^{s_k}\overline{Q}(r;u,x^0)dr \Bigg] \nonumber\\
					<&\int_{B_1}\frac{1}{2}\left|\nabla(\frac{v-h}{\delta_k})\right|^2 \ dx-\int_{\partial B_1}\left(\frac{v-h}{\delta_k}\right)^2\  d\mathcal{H}^{n-1}
					+\frac{1}{\delta_k^2}\int_{B_1}{G}(s_k;v)-v\chi_{\{h>0\}} dx\nonumber\\
					&-\frac{1}{2\delta_k^2\log {s_k}}\int_{B_1}\frac{1}{2}|\nabla v|^2+{G}(s_k;v)\ dx -\frac{1}{\delta_k^2}\int_{0}^{s_k}\overline{Q}(r;u,x^0)dr.
				\end{align}
				
			Due to $\|{w}_k\|_{W^{1,2}(B_1;\mathbb{R}^m)}=1$, namely, ${w}_k$ is bounded in $W^{1,2}(B_1)$, this yields that there is a weakly convergent subsequence, still denoted by ${w}_k$, such that ${w}_k\rightharpoonup {w}$ in $W^{1,2}(B_1)$ for some ${w}$ belonging to $W^{1,2}(B_1)$, then we aim to demonstrate ${w}_k\to {w}$ in $W^{1,2}(B_1)$, with the additional condition that ${w}= 0$ in $B_1$. Our proof is constructed due to the following four claims.

			Claim 1: ${w}= 0$ in $B_1^-:=\{x\in B_1: x_n<0\}$.
			
			To see this, let $v:=(1-\xi)\mathcal{C}_k+\xi h$ in \eqref{eq3.5}, where $\xi(x)=\xi(|x|) \in C_0^\infty(B_1)$, $0\leq\xi \leq 1$ in $B_1$ and $\displaystyle \int_{B_1} \xi dx=1$. Since $\displaystyle\frac{v-h}{\delta_k}=(1-\xi)(\displaystyle\frac{\mathcal{C}_k-h}{\delta_k})=(1-\xi)w_k,$ then the inequality \eqref{eq3.5} implies
				\begin{align}\label{eq3.6}
					&(1-\eta_k)\Bigg[ \int_{B_1}\frac{1}{2}|\nabla(w_k)|^2 \ dx- \int_{\partial B_1} w_k^2\  d\mathcal{H}^{n-1} +\int_{B_1}\frac{1}{\delta_k^2}G\left(s_k;\mathcal{C}_k\right)- \mathcal{C}_k \chi_{\{h>0\}} \ dx\nonumber\\
					&
					-\frac{1}{2\delta_k^2\log s_k}\int_{B_1}\frac{1}{2}|\nabla(\mathcal{C}_k)|^2+G\left(s_k;\mathcal{C}_k\right) dx-\frac{1}{\delta_k^2}\int_{0}^{s_k}\overline{Q}(r;u,x^0)dr \Bigg] \nonumber\\
					<&\int_{B_1}\frac{1}{2}|\nabla((1-\xi)w_k)|^2 \ dx-\int_{\partial B_1}\left((1-\xi)w_k\right)^2\  d\mathcal{H}^{n-1}
					+\frac{1}{\delta_k^2}\int_{B_1}  {G}(s_k;v)- v \chi_{\{h>0\}}        \ dx\nonumber\\
					&
					-\frac{1}{2\delta_k^2\log s_k}\int_{B_1}\frac{1}{2}|\nabla v|^2+{G}(s_k;v) dx-\frac{1}{\delta_k^2}\int_{0}^{s_k}\overline{Q}(r;u,x^0)dr.
				\end{align}

			Notice that
			 \begin{align*}
				&\int_{B_1}|\nabla((1-\xi)w_k)|^2 dx-2\int_{\partial B_1}|(1-\xi)w_k|^2 \ d\mathcal{H}^{n-1}\\
				&-(1-\eta_k)\left(\int_{B_1}|\nabla w_k|^2\ dx-2\int_{\partial B_1}|w_k|^2 d\mathcal{H}^{n-1} \right)\\
				\leq &\int_{B_1}|\nabla((1-\xi)w_k)|^2 dx+2(1-\eta_k)\int_{\partial B_1}|w_k|^2d\mathcal{H}^{n-1}\\
				\leq&C_1,
			\end{align*}
			where $C_1$ is a positive uniform constant, on account of $\|w_k\|_{ {W^{1,2}}(B_1)}=1$.  Moreover, recalling that $\mathcal{C}_k\in W^{1,2}(B_1)$, $v\in W^{1,2}(B_1)$ and $\left(-\log s_0(\delta_k)\right)^{-1}=o(\delta_k^{\frac{2}{\gamma}})$ implies that $\displaystyle\frac{1}{2\delta_k^2\log s_k} \to 0$ as $k\to \infty$, these imply that 
			\begin{align*}
				(1-\eta_k)\frac{1}{2\delta_k^2\log s_k}\int_{B_1}\frac{1}{2}|\nabla \mathcal{C}_k|^2+{G}\left(s_k;\mathcal{C}_k\right)\ dx-\frac{1}{2\delta_k^2\log s_k}\int_{B_1}\frac{1}{2}|\nabla v|^2+{G}(s_k;v)\ dx\leq C_2,
			\end{align*}
			for sufficiently large $k$. 
			Again, based on the choice of $s_0(\delta_k)$ such that $\left(-\log s_0(\delta_k)\right)^{-1}=o(\delta_k^\frac{2}{\gamma})$ as $k\to\infty$, we also have regarding the integrable term $\overline{Q}$,
		 \begin{align*}
					\displaystyle\lim_{k\to \infty} \frac{1}{\delta_k^2}\int_{0}^{s_k}\overline{Q}(r;u,x^0)dr =0;
				\end{align*}
			 \begin{align}\label{Qde}
					\eta_k\displaystyle\frac{1}{\delta_k^2}\displaystyle\int_{0}^{s_k}\overline{Q}(r;u,x^0)dr \to 0 \quad\text{as}\quad k\to \infty.
				\end{align}
				Hence, for sufficiently large $k$,$$  \eta_k\displaystyle\frac{1}{\delta_k^2} \displaystyle\int_{0}^{s_k}\overline{Q}(r;u,x^0)dr\leq C_3,$$
				 where $C_3$ is a positive constant.

			Moreover, recalling that $\left(-\log s_0(\delta_k)\right)^{-1}=o(\delta_k^{\frac{2}{\gamma}})$ and $\gamma\in(0,1)$, it shows that $\left(-\log s_k\right)^{-1}=o(\delta_k^2)$. And the facts that $\mathcal{C}_k\in W^{1,2}(B_1)$ and $v\in W^{1,2}(B_1)$, it follows that for sufficiently large $k$,
			\begin{align*}
					-\frac{1}{2\log s_k}\int_{B_1} \frac{1}{2}|\nabla \mathcal{C}_k|^2+{G}\left(s_k;\mathcal{C}_k\right) dx &=o(\delta_k^2),\\
					\text{and}\qquad\qquad\qquad\qquad\qquad\qquad\qquad\qquad\qquad\\
					-\frac{1}{2 \log s_k}\int_{B_1} \frac{1}{2}|\nabla  v|^2+{G}(s_k;v) dx&=o(\delta_k^2).
			\end{align*}
			Therefore, let's focus on the remaining part,
				\begin{align*}
					(1-\eta_k)\Bigg[\int_{B_1} G\left(s_k;\mathcal{C}_k\right)- \mathcal{C}_k \chi_{\{h>0\}}   \ dx
				\Bigg] <C_4 \delta_k^2+\int_{B_1} {G}(s_k;v)-v\chi_{\{h>0\}}\ dx,
				\end{align*}
			where $C_4=\max \left\{C_1, C_2,C_3\right\}$ is a positive constant.
		
			Let us now observe that $F(u)=u( -\log u+1)$ not only lacks the scaling property (see \eqref{noscaling}) as discussed in \cite{w99} and \cite{w00}, but also lacks convexity as mentioned in \cite{w00,dz}. To address the challenges posed by these properties in calculations, we have to develop a method to analyze ${G}(s_k;v)$ as $k\to \infty$,
			
			\begin{equation}\label{eq3.7}
				{G}(s_k;v)=v+o\left((1-2\log s_k)^{-\gamma}\right),
			\end{equation}
			where $``o"$ represents higher-order infinitesimal term, i.e.,
			\begin{equation*}
				\displaystyle\lim_{k\to \infty}\frac{{G}(s_k;v)-v}{(1-2\log s_k)^{-\gamma}}=0.
			\end{equation*}
			
			This approach yields the result that
		  	\begin{align*}
		  		&(1-\eta_k)\left[\int_{ B_1} \mathcal{C}_k + o\left((1-2\log s_k)^{-\gamma}\right)\ dx-\int_{ B_1^+}\mathcal{C}_k \ dx\right]\\
		  		<\ 
		  		&C_4\delta_k^2+\int_{ B_1} v + o\left((1-2\log s_k)^{-\gamma}\right)dx-\int_{ B_1^+}vdx,
		  	\end{align*}
			then, the above inequality can be simplified to
			 
			 		\begin{align*}
					\int_{ B_1^-}(\xi-\eta_k)\mathcal{C}_k \ dx< C_4\delta_k^2+\eta_k o\left((1-2\log s_k)^{-\gamma}\right).
				\end{align*}

			Next, similarly, by dividing both sides of the inequality by $\delta_k^2$, we can obtain the following inequality,
			\begin{equation*}
				\int_{ B_1^-}(\xi - \eta_k)\frac{w_k}{\delta_k} dx<C_4+\eta_k o(1)\leq C,
			\end{equation*}
			for sufficiently large $k$, where $C\geq C_4$. Furthermore, observe that the test function $\xi$ is a radial function, we can obtain that
			
			\begin{equation*}
				\int_{0}^1(\xi(\rho) - \eta_k)\int_{\partial B_{\rho}^-}\frac{w_k}{\delta_k} d\mathcal{H}^{n-1}d \rho\leq C.
			\end{equation*}
			As $k\to\infty$, we have that $\eta_k\to 0$ and $\delta_k\to 0$. It implies that $w=0$ in $B_1^-$.

			Claim 2: $\Delta w=0$ in $B_1^+$.
			
			To see this, one can select a ball $B_0\Subset B_1^+(0)$, and reassign $ {v}:=(1-\xi) {c}_k+ \xi( {h}+\delta_k {g})$ in \eqref{eq3.4}, where $\xi\in C_0^{\infty}(B_1^+)$ and $ {g}\in W^{1,2}(B_1)$ such that $\xi=1$ in $B_0$, $\xi=0$ in $B_1^-$, $0\leq\xi\leq1$ in $B_1^+\setminus B_0$. On the one hand,  It follows that $v|_{\partial B_1}=\mathcal{C}|_{\partial B_1}$ and $v=\mathcal{C}$ in $B_1^-$. on the other hand, due to the expression $h={G}(s_k;h)+o((1-2\log  s_k)^{-\gamma})$, as $k\to \infty$ and $s_k\leq s_0(\delta_k)$. therefore, the inequality \eqref{eq3.5} arrives at

	 	\begin{align*}
	 	& \frac{1}{2}\int_{B_1}|\nabla w_k|^2 \ dx-\frac{1}{2\delta_k^2\log {s_k}}\int_{B_1}\frac{1}{2}|\nabla \mathcal{C}_k|^2+{G}\left(s_k;\mathcal{C}_k\right)\ dx -\frac{1}{\delta_k^2}\int_{0}^{s_k}\overline{Q}(r;u,x^0)dr +\frac{1}{\delta_k^2}o((1-2\log  s_k)^{-\gamma})\nonumber\\
	 	<&\frac{1}{2} \int_{B_1}|\nabla((1-\xi)w_k+\xi {g})|^2 \ dx
	 	-\frac{1}{2\delta_k^2\log {s_k}}\int_{B_1}\frac{1}{2}|\nabla v|^2+{G}(s_k;v)\ dx-\frac{1}{\delta_k^2}\int_{0}^{s_k}\overline{Q}(r;u,x^0)dr\\ & \frac{1}{\delta_k^2}o((1-2\log  s_k)^{-\gamma})+o(1).
	 \end{align*}
	Moreover, recalling that \eqref{Qde}, $\mathcal{C}_k$, $v\in W^{1,2}(B_1)$, and $\left(-\log s_k(\delta_k)\right)^{-1}=o(\delta_k^{\frac{2}{\gamma}})$, it implies that 
	 
				\begin{align*}
						\int_{B_1^+}\frac{1}{2}|\nabla {w}_k|^2\ dx 
					< \int_{B_1^+}|\nabla((1-\xi)w_k+\xi {g})|^2 \ dx+
					o(1).
				\end{align*}
			As $k\to \infty$, indicating 
			\begin{equation}\label{eq3.8}
				\int_{B_1}|\nabla w|^2 dx\leq\int_{B_1}|\nabla((1-\xi )w +\xi g)|^2 dx	.	
			\end{equation}
 Recalling the choice of the auxiliary function $\xi$, we can obtain
			$$\int_{B_0}|\nabla w|^2 dx\leq \int_{B_0}|\nabla g|^2 dx,$$
		for all $g\in W^{1,2}(B_1)$ coinciding with $w$ on $\partial B_0$, and based on the selection of $B_0\Subset B_1^+(0)$, it follows that $w$ is the minimizer of the Dirichlet energy $\int_{ B_1^+}|\nabla g|^2 dx$ in the class $$\{g\in W^{1,2}(B_1); \quad g=w \quad\text{on }\quad \partial B_1^+\},$$ which corresponds to the Euler-Lagrange equation $\Delta w = 0$ in $B_1^+$.

			Claim 3: ${w}=0$ in $B_1$.
			
	For the verification of Claim 3, the procedure closely follows Step 3 of \cite[Theorem 3.1]{w00} and \cite[Theorem 1]{asu}. Here, we present a concise overview of the proof process.
	
	Given that $w= 0$ in $B_1^-$ (Claim 1) and $\Delta w=0$ in $B_1^+$ (Claim 2), and $w$ is a homogeneous harmonic function of degree 2. Initially, by odd extension, we extend $w$ to a homogeneous function of degree 2. According to Liouville's theorem, $w=\sum_{j=1}^{n-1}a_{nj}x_jx_n$ in $B_1^+$. Recalling that we have chosen $h$ as the minimizer of $\inf_{h_{\nu}\in\mathbb{H}}\|\mathcal{C}_k-h_{\nu}\|_{W^{1,2}(B_1)}$ (see \eqref{h1} and \eqref{h2}), leading to the deduction that $w= 0$ in $B_1^+$.

			Claim 4: $ {w}_k\to {w}$ strongly in $W^{1,2}(B_1)$.
			
			To illustrate this, considering the strong convergence $w_k\to w$ in $L^2(B_1)$, with $w= 0$ in $B_1$, our task is to establish the estimate $\|\nabla w_k\|_{L^2(B_1)}$. Let's define ${v}=(1-\xi) \mathcal{C}_k+\xi {h}$ with
			\begin{align*}
				\begin{split}
					\zeta(x)=\xi(|x|)=\left\{
					\begin{array}{lr}
						0,                 &|x|\geq  1,\\
						2-2|x|,                 &1/2<|x|\leq 1,\\
						1,         &|x|\leq \frac{1}{2},\\
					\end{array}
					\right.
				\end{split}
			\end{align*}
		that is $\xi(|x|)=\min\left(2\max(1-|x|,0),1\right)$.
			
		Noting $\displaystyle\frac{{v}-{h}}{\delta_k}=(1-\xi)w_k$, akin to the derivation of inequality \eqref{eq3.8}, one also derives the following
				\begin{align*}
					& \int_{B_1}|\nabla w_k|^2 dx \\
					<&\int_{B_1}|\nabla((1-\xi)w_k)|^2 dx +o(1)\\
					=&\int_{B_1} (1-\xi)^2|\nabla w_k|^2-2(1-\xi)(\nabla w_k\cdot \nabla\xi)w_k+|\nabla \xi|^2 w_k^2 dx +o(1).
				\end{align*}
        We can further simplify the above inequality to obtain
        	\begin{equation*}
        		 \left(1-(1-\xi)^2\right)\int_{B_1}|\nabla w_k|^2 dx < \int_{B_1} -2(1-\xi)(\nabla w_k\cdot \nabla\xi)w_k+|\nabla \xi|^2 w_k^2 dx + o(1).
        \end{equation*}
        Next, on one hand, the definition of $\xi$ shows that
        
        \begin{equation*}
        	\left(1-(1-\xi)^2\right)\int_{B_1}|\nabla w_k|^2 dx \geq \int_{B_{1/2}}|\nabla w_k|^2 dx.
        \end{equation*}
        On the other hand, since $w_k$ weakly converges to $w$ in $W^{1,2}(B_1)$, the right-hand side of the inequality tends to $0$.
        
        Finally, leveraging the homogeneity of $w_k$, we can conclude
        
				\begin{equation*}
					\int_{B_1}|\nabla w_k|^2 dx =2^{n+2}\int_{B_{1/2}}|\nabla w_k|^2 dx\to 0,\quad \text{as}\quad k\to \infty.
			\end{equation*}
		As a consequence, $w_k\to 0$ strongly in $W^{1,2}(B_1)$, which contradicts the fact that $\|w_k\|_{W^{1,2}(B_1)}=1$.
			\qed

		\vspace{25pt}

		\section{Energy decay and uniqueness of blow-up limits }
		In this section, we will estimate the energy decay and convergence rate of scaled solution to its blow-up limit. Before proceeding, it is indeed necessary to establish some preliminary results. Firstly, let us explore the intriguing outcomes related to homogeneous solutions in the classical context.

		\begin{definition}\label{global}
			$v\in W^{1,2}(\mathbb{R}^n)$ is called \textit{a homogeneous global solution of degree 2} (can be simply noted as the 2-homo solution), provided that it satisfies the following two conditions.
			
			(1) ({\it Homogeneity})
			$$ v(\lambda x)=\lambda^2v(x) \ \ \  \text{for all } \ \ \lambda>0 \ \ \text{and} \ \  x\in \mathbb{R}^n.$$
			
			(2) It is a weak solution to
			\begin{equation}\label{eq4.1}
				\Delta v= \chi_{\{v>0\}}\quad \text{in}\quad \mathbb{R}^n.
			\end{equation}
		\end{definition} 
		
			Next, we will show the characterization of $\mathcal{R}_u$ (Proposition \ref{regular} ). The proof will follow from \cite[Lemma 2, Proposition 3]{w99}, whose statements we recall below. The theorem demonstrates that the significant properties of 2-homo solutions.

		\renewcommand{\themythm}{\Alph{mythm}}
		\begin{mythm}\label{isolated} The half space solutions are (in the $W^{1,2}(B_1)$-topology) isolated with the class of homogeneous global solutions of degree 2, which means that any 2-homo solutions (say $u_m$) such that $u_m\to u_0$ in $W^{1,2}(B_1)$ and 
			\begin{equation*}
				0<\inf_{h_{\nu}\in\mathbb{H}} \|u_m -h_{\nu}\|_{W^{1,2}(B_1)}\to 0,\quad \text{as}\quad m\to 0,
			\end{equation*}
			then, $\inf_{h_{\nu}\in\mathbb{H}} \|u_0 -h_{\nu}\|_{W^{1,2}(B_1)}=0$. Additionally, let u be a homogeneous global solution of degree 2 as in Definition \ref{global}, the energy density
			\begin{equation}\label{eq4.2}
				M_0(u)\geq  \frac{\omega_n}{2}.
			\end{equation}
			Moreover, $\displaystyle M_0(u)=\frac{\omega_n}{2}$ implies that $u\in \mathbb{H}.$
		\end{mythm}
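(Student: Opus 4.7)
The plan is to reduce to the classical obstacle problem via Remark~\ref{r2.8}, invoke Caffarelli's classification of $2$-homogeneous global solutions of $\Delta u = \chi_{\{u>0\}}$, and then evaluate $M_0$ explicitly on each class. By that classification, any nonnegative $u$ as in Definition~\ref{global} is either a half-space solution $h_\nu \in \mathbb{H}$ or a quadratic polynomial $p_A(x) = \tfrac{1}{2}\langle Ax,x\rangle$ with $A$ symmetric positive semidefinite and $\mathrm{tr}(A) = 1$ (note that for such $p_A$ the coincidence set $\ker A$ has Lebesgue measure zero, so $\Delta p_A = 1 = \chi_{\{p_A>0\}}$ a.e.).

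For any nonnegative $2$-homogeneous weak solution, Euler's identity gives $x\cdot\nabla u = 2u$ on $\partial B_1$. Combined with integration by parts and $u\Delta u = u\chi_{\{u>0\}} = u$, one obtains
\[
\int_{B_1}|\nabla u|^2\,dx \;=\; 2\int_{\partial B_1} u^2\,d\mathcal{H}^{n-1}\;-\;\int_{B_1} u\,dx,
\]
so the definition of $M_0$ in \eqref{M_0(v)} collapses to the clean formula $M_0(u) = \tfrac{1}{2}\int_{B_1} u\,dx$ (this is consistent with \eqref{eq2.4} already established for the blow-up limit). Using symmetry, $\int_{B_1}\max(x\cdot\nu,0)^2\,dx = \tfrac{1}{2n}\int_{B_1}|x|^2\,dx$, so $M_0(h_\nu) = \tfrac{1}{8n}\int_{B_1}|x|^2\,dx = \omega_n/2$, matching \eqref{alp_n}. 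For a polynomial $p_A$, using $\int_{B_1}x_i x_j\,dx = \tfrac{\delta_{ij}}{n}\int_{B_1}|x|^2\,dx$, one gets $M_0(p_A) = \tfrac{\mathrm{tr}(A)}{4n}\int_{B_1}|x|^2\,dx = \omega_n$. Since $\omega_n > \omega_n/2$, this proves \eqref{eq4.2} together with the equality characterization $M_0(u) = \omega_n/2 \Rightarrow u\in\mathbb{H}$.

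For the isolation statement, suppose that a sequence $u_m$ of $2$-homogeneous global solutions satisfies $u_m \notin \mathbb{H}$, $\mathrm{dist}_{W^{1,2}(B_1)}(u_m,\mathbb{H}) \to 0$, and $u_m \to u_0$ in $W^{1,2}(B_1)$. By the dichotomy, each $u_m$ must be a polynomial, so $M_0(u_m) = \omega_n$. Since the linear functional $v\mapsto \tfrac12\int_{B_1}v\,dx$ is continuous under $W^{1,2}$ convergence, passing to the limit gives $M_0(u_0) = \omega_n$. On the other hand, $\mathrm{dist}_{W^{1,2}}(u_0,\mathbb{H}) = 0$ together with the closedness of $\mathbb{H}$ (parametrized by the compact sphere $\partial B_1$) forces $u_0 \in \mathbb{H}$ and hence $M_0(u_0) = \omega_n/2$. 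The contradiction $\omega_n = \omega_n/2$ rules out such a sequence, which in particular yields the claimed conclusion $\mathrm{dist}_{W^{1,2}}(u_0,\mathbb{H}) = 0$.

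The principal obstacle is the appeal to Caffarelli's dichotomy, which is nontrivial but directly applicable because Remark~\ref{r2.8} reduces blow-up limits in our setting to global nonnegative solutions of the standard obstacle problem $\Delta u_0 = \chi_{\{u_0>0\}}$ in $\mathbb{R}^n$; once the dichotomy is in hand, the remaining steps are elementary integration by parts and direct integral evaluation.
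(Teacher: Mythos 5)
Your proposal is correct, but the comparison here is lopsided: the paper does not actually prove Theorem \ref{isolated} at all --- it recalls it from Weiss, saying only that ``the proof will follow from \cite[Lemma 2, Proposition 3]{w99}.'' So you have supplied an argument where the paper supplies a citation. Your route is: work directly with Definition \ref{global} (which already places us in the classical obstacle problem, so Remark \ref{r2.8} is not even needed for the theorem itself), invoke the classification of nonzero $2$-homogeneous global solutions --- half-space solutions $h_\nu$ or polynomials $p_A=\frac{1}{2}\langle Ax,x\rangle$ with $A\geq 0$, $\mathrm{tr}(A)=1$ --- and evaluate $M_0$ on each class through the identity $M_0(u)=\frac{1}{2}\int_{B_1}u\,dx$, which your integration by parts plus Euler's identity establishes for any $2$-homogeneous solution. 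The computations all check: $M_0(h_\nu)=\omega_n/2$ (consistent with \eqref{alp_n}) and $M_0(p_A)=\omega_n$, so the gap is an explicit factor of $2$, giving \eqref{eq4.2}, the equality case, and then the isolation by a limit argument using the compactness (hence closedness) of $\mathbb{H}$. What your approach buys is transparency and explicit constants; what it costs is that all of the depth is outsourced to Caffarelli's classification, a theorem of at least the same weight as the Weiss statements the paper cites (though it is covered by the paper's own references \cite{c77,c98,psu,f18}), so in effect one citation has been traded for another. If your source states the classification only for blow-ups at free boundary points, add the one-line remark that a nonzero $2$-homogeneous global solution is its own blow-up at the origin, which is a free boundary point.

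Three points should be tightened, none fatal. (i) The function $u\equiv 0$ satisfies Definition \ref{global} and has $M_0(0)=0<\omega_n/2$, so both the theorem as stated and your dichotomy implicitly assume $u\not\equiv 0$; say so explicitly (in the isolation argument this is harmless, since $\mathrm{dist}_{W^{1,2}}(0,\mathbb{H})$ is a fixed positive number, so $u_m\neq 0$ for large $m$). (ii) In the limit passage you write $M_0(u_0)=\omega_n$ by continuity of $v\mapsto\frac{1}{2}\int_{B_1}v\,dx$, but the identity $M_0(u_0)=\frac{1}{2}\int_{B_1}u_0\,dx$ is not yet known for the limit $u_0$; either note that all three terms of $M_0$ in \eqref{M_0(v)} pass to the limit under strong $W^{1,2}(B_1)$ convergence (the boundary term by continuity of the trace operator into $L^2(\partial B_1)$), or phrase the contradiction purely at the level of the linear functional: $\int_{B_1}u_m\to\int_{B_1}u_0$ forces $\int_{B_1}u_0=2\omega_n$, while $u_0\in\mathbb{H}$ forces $\int_{B_1}u_0=\omega_n$. (iii) Be aware that the conclusion of the isolation statement as literally written, $\inf_{h_\nu\in\mathbb{H}}\|u_0-h_\nu\|_{W^{1,2}(B_1)}=0$, is already a triviality by the triangle inequality; your argument in fact proves the stronger statement that no such sequence with $u_m\notin\mathbb{H}$ exists, i.e.\ a uniform positive distance between $\mathbb{H}$ and the other $2$-homogeneous solutions, which is the isolation property actually used later in the paper.
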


		As \cite[Corollary 2]{w99}, we use the similar energy density characterization of the regular part of the free boundary.
		\begin{proposition}{\it(characterization of $\mathcal{R}_u$)}\label{regular}
			A point $x^0$ is a regular free boundary point for $u$ provided that
			$$x^0\in\mathscr{F}(u)=\partial\{ u>0\} \cap \Omega\qquad \text{and} \qquad \displaystyle\lim_{r\rightarrow 0}W(r;u,x^0)=\displaystyle\frac{\omega_n}{2}.$$
			
		\end{proposition}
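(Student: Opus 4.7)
The plan is to show that the stated energy condition forces every blow-up limit of $u$ at $x^0$ to be a half-space solution, which by definition places $x^0$ in $\mathcal{R}_u$. The argument is a short chaining of three results already in hand: the 2-homogeneity of blow-up limits from Lemma \ref{property}(2), the identification $\Delta u_0=\chi_{\{u_0>0\}}$ in Remark \ref{r2.8}, and the rigidity Theorem \ref{isolated}.

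First, fix an arbitrary sequence $r_m\to 0+$. The growth estimate in Theorem \ref{lem1.1.} bounds $u_{r_m}$ uniformly in $W^{1,2}_{\mathrm{loc}}(\mathbb{R}^n)$, so, after extracting a subsequence if necessary, $u_{r_m}\rightharpoonup u_0$ weakly in $W^{1,2}_{\mathrm{loc}}$. Lemma \ref{property}(2) then guarantees that $u_0$ is non-negative and homogeneous of degree $2$, while Remark \ref{r2.8} shows that $u_0$ solves the classical obstacle equation on all of $\mathbb{R}^n$. Hence $u_0$ is a $2$-homogeneous global solution in the sense of Definition \ref{global}. Next, the computation \eqref{eq2.4} carried out inside the proof of Lemma \ref{property}(2) yields the identification
\begin{equation*}
W(0+;u,x^0)=\int_{B_1}\Bigl(\tfrac{1}{2}|\nabla u_0|^2+u_0\Bigr)\,dx-\int_{\partial B_1}u_0^2\,d\mathcal{H}^{n-1}=M_0(u_0),
\end{equation*}
so combining with the hypothesis $\lim_{r\to 0}W(r;u,x^0)=\omega_n/2$ gives $M_0(u_0)=\omega_n/2$. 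The equality clause of Theorem \ref{isolated} then forces $u_0\in\mathbb{H}$. Since the limiting value of $W(r;u,x^0)$ does not depend on the subsequence, every blow-up limit at $x^0$ is a half-space solution, which is exactly the definition of a regular free boundary point.

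The only non-cosmetic point is the passage from weak to strong convergence of $u_{r_m}$ in $W^{1,2}(B_1)$ needed to justify the identity $W(0+;u,x^0)=M_0(u_0)$ above; this is where the variable parameter $\alpha(r)$ and the integrability of $Q(r;u,x^0)$ from Remark \ref{q(r)} do the work. The monotonicity of $W(r;u,x^0)-\int_0^r Q(s;u,x^0)\,ds$ combined with \eqref{order} forces the homogeneity relation $\nabla u_0\cdot x - 2u_0\equiv 0$ and upgrades weak convergence to convergence of the energies via the equation \eqref{eq2.5}. Since this upgrade is already executed inside the proof of Lemma \ref{property}(2), the present proposition reduces to a direct appeal to that lemma combined with Theorem \ref{isolated}.
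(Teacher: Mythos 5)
Your proposal is correct and follows essentially the same route as the paper: the paper's (very terse) justification likewise identifies $W(0+;u,x^0)$ with $M_0(u_0)$ for any blow-up limit $u_0$ via Lemma \ref{property}(2) and Remark \ref{r2.8}, and then invokes the rigidity clause of Theorem \ref{isolated} (together with the computation \eqref{alp_n}) to force $u_0\in\mathbb{H}$ along every subsequence. Your write-up simply makes explicit the steps the paper compresses into two sentences, including the correct observation that the subsequence-independence of $W(0+;u,x^0)$ is what converts ``some blow-up limit is a half-space'' into ``every blow-up limit is a half-space,'' as required by the definition of $\mathcal{R}_u$.
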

	    In fact, the regular points defined here by energy density are equivalent to the forms of the blow-up limits of solutions given earlier in page 9 for $\mathcal{R}_u$. This equivalence can be derived from \eqref{alp_n} and the Theorem \ref{isolated}.
		
		Therefore, we provide a way to characterize regular points due to the energy density. With this characterization, we can utilize energy decay to establish the uniqueness of the blow-up limit. In other words, there is an equivalence between the blow-up limit and the energy density limit.
		
		\begin{remark}
			In this work, to investigate the regularity of free boundary, we mainly consider the regular set $\mathcal{R}_{u}$. As for the singular set of free boundary points, the analysis of regularity is still open up to now and this would be our future research direction.
		\end{remark}
		\begin{remark}
			Theorem \ref{isolated}, along with the definition of $\mathcal{R}_u$, implies that the set of regular free boundary points $\mathcal{R}_u$ is open with respect to $\mathscr{F}(u)$.
		\end{remark}

	Subsequently, we demonstrate an energy decay estimate via the epiperimetric inequality. Building upon this energy decay analysis, we establish the uniqueness of the blow-up limit.	Here, the energy has subtracted the antiderivative of an integrable function, which is key to our handling of the term involving the logarithmic function.

	\textit{Proof of Proposition \ref{uniqueness}.}
	Initially, we embark on an analysis of the decay rate of  $\overline{W}(r;u,x^0)$  by establishing a linear inequality for $\displaystyle\frac{d}{dr}\overline{W}(r;u,x^0)$. Therefore, we first let  $\mathscr{E}(r):=\overline{W}(r;u,x^0)-\overline{W}(0+;u,x^0)$, then 
	
			\begin{align}\label{e(r)}
					\displaystyle \mathscr{E}(r)=&\frac{\alpha(r)}{r^{n+2}(1-2\log r)^{2}}\int_{B_r(x^0)} \frac{1}{2}|\nabla u|^2+F(u)dx-\frac{1}{r^{n+3}(1-2\log r)^{2}}\int_{\partial B_r(x^0)}u^2 d\mathcal{H}^{n-1}\nonumber\\
				&-\int_{0}^{r}\overline{Q}(s;u,x^0)ds-W(0^+;u,x^0)\nonumber\\
				=&\alpha(r)I(r)-J(r)-\int_{0}^{r}\overline{Q}(s;u,x^0)ds-W(0^+;u,x^0),	
			\end{align}
		where for convenience, we define
		 $$ I(r):=\frac{1}{r^{n+2}(1-2\log r)^{2}}\int_{B_r(x^0)} \frac{1}{2}|\nabla u|^2+F(u)dx,$$
		 and$$ J(r):=\frac{1}{r^{n+3}(1-2\log r)^{2}}\int_{\partial B_r(x^0)}u^2 d\mathcal{H}^{n-1},$$
		 where	
		 \begin{equation*}
		 	F(u)=	u( -\log u+1),     \qquad u\geq 0.
		\end{equation*}
		  Next, a series of direct computation gives that
		 	 
			\begin{align}\label{eq4.5}
				\mathscr{E}'(r)=&\left(\alpha(r)I(r)\right)'-J'(r)-\overline{Q}(r;u,x^0)\nonumber\\
				=&-\frac{1}{r}(n+2)\alpha(r)I(r)-\frac{1}{r}\left(\frac{1-4\log r}{\log r(1-2\log r)}\right)\alpha(r)I(r)+\frac{\alpha(r)}{r}\int_{\partial B_1}\frac{1}{2}|\nabla u_r|^2 d\mathcal{H}^{n-1}\nonumber\\
				&+\frac{\alpha(r)}{r}\int_{\partial B_1}{G}(r;u_r)d\mathcal{H}^{n-1}-J'(r)-\overline{Q}(r;u,x^0),
					\end{align}
					where  \begin{equation*}
						{G}(r;u_r)=\displaystyle\frac{u_r}{1-2\log r}\left[-\log \left(u_rr^2(1-2\log r)\right) +1\right],\qquad u_r\geq 0.
					\end{equation*}
				Here, we firstly note that
				$$\int_{\partial B_1}|\nabla u_r|^2d\mathcal{H}^{n-1}=\int_{\partial B_1} |\nabla_{\mathbf{n}} u_r|^2+|\nabla_{\theta} u_r|^2d\mathcal{H}^{n-1},$$
				and
				 $$\int_{\partial B_1}|\nabla_{\theta}u_r|^2 d\mathcal{H}^{n-1}=\int_{\partial B_1}|\nabla_{\theta}\mathcal{C}_r|^2 d\mathcal{H}^{n-1},$$
				 where $\nabla_{\theta}u_r:=\nabla u_r- (\nabla u_r\cdot \mathbf{n})\mathbf{n}$ as the surface derivative of $u_r$ and $\mathbf{n}$ denotes the topological outward normal of $\partial B_1$.

				  These imply that
				 
					\begin{align}	\label{eq4.6}
						\int_{\partial B_1}\frac{1}{2}|\nabla u_r|^2+{G}(r;u_r)d\mathcal{H}^{n-1}=&\int_{\partial B_1}\frac{1}{2}|\nabla_{\mathbf{n}} u_r|^2+\frac{1}{2}|\nabla_{\theta} u_r|^2+{G}(r;u_r)d\mathcal{H}^{n-1}\nonumber\\
						=&\int_{\partial B_1}\frac{1}{2}|\nabla_{\mathbf{n}} u_r|^2+\frac{1}{2}|\nabla_{\theta} \mathcal{C}_r|^2+{G}(r;u_r)d\mathcal{H}^{n-1}\nonumber\\
						=&\int_{\partial B_1}\frac{1}{2}|\nabla_{\mathbf{n}} u_r|^2+\frac{1}{2}|\nabla \mathcal{C}_r|^2-\frac{1}{2}|\nabla_{\mathbf{n}} \mathcal{C}_r|^2+{G}(r;u_r)d\mathcal{H}^{n-1} \nonumber\\
						=&\int_{\partial B_1}\frac{1}{2}|\nabla_{\mathbf{n}} u_r|^2+\frac{1}{2}|\nabla \mathcal{C}_r|^2-2 \mathcal{C}_r^2+{G}(r;u_r)d\mathcal{H}^{n-1}\nonumber\\
						=&\int_{\partial B_1}\frac{1}{2}|\nabla \mathcal{C}_r|^2+{G}(r;u_r)d\mathcal{H}^{n-1}+ \int_{\partial B_1}-2 \mathcal{C}_r^2+\frac{1}{2}|\nabla_{\mathbf{n}} u_r|^2d\mathcal{H}^{n-1} \nonumber\\
						=& (n+2)\int_{ B_1}\frac{1}{2}|\nabla \mathcal{C}_r|^2+{G}(r;\mathcal{C}_r)dx-\frac{2}{n+2}\int_{\partial B_1}\frac{\mathcal{C}_r}{1-2\log r}d\mathcal{H}^{n-1}\nonumber\\
						& + \int_{\partial B_1}-2 \mathcal{C}_r^2+\frac{1}{2}|\nabla_{\mathbf{n}} u_r|^2d\mathcal{H}^{n-1} ,
						\end{align}
						where $\nabla_{\mathbf{n}} u_r:=( \nabla u_r\cdot \mathbf{n})\mathbf{n}$ and $\mathbf{n}$ denotes the topological outward normal of $\partial B_1$.
					Moreover,
					\begin{align}\label{eq4.7}
						J'(r)=\left(\int_{\partial B_1} u_r^2 d\mathcal{H}^{n-1}\right)'=\frac{2}{r}\int_{\partial B_1}u_r\left(\nabla u_r\cdot x-\frac{2}{\alpha(r)}u_r\right)d\mathcal{H}^{n-1}.
					\end{align}
				Then, substituting \eqref{eq4.6} - \eqref{eq4.7} into \eqref{eq4.5}, we get that
			
				\begin{align*}	
			\mathscr{E}'(r)=&\frac{n+2}{r}\left(M(r;\mathcal{C}_r)-M(r;u_r)\right)+\frac{\alpha(r)}{r}\int_{\partial B_1}-2 \mathcal{C}_r^2+\frac{1}{2}|\nabla_{\mathbf{n}} u_r|^2d\mathcal{H}^{n-1}\\
			&-\int_{\partial B_1}\frac{2}{r}u_r\nabla u_r\cdot xd\mathcal{H}^{n-1}+\int_{\partial B_1}\frac{4}{r\alpha(r)}u_r^2d\mathcal{H}^{n-1}-\frac{1}{r}\left(\frac{1-4\log r}{\log r(1-2\log r)}\right)\alpha(r)I(r)\\
			&-\frac{2\alpha(r)}{(n+2)r}\int_{\partial B_1}\frac{\mathcal{C}_r}{1-2\log r}d\mathcal{H}^{n-1}-\overline{Q}(r;u,x^0)\\
			=&\frac{n+2}{r}\left(M(r;\mathcal{C}_r)-M(r;u_r)\right)+\frac{\alpha(r)}{2r}\int_{\partial B_1}\left(\nabla u_r\cdot x -\frac{2}{\alpha(r)}u_r\right)^2d\mathcal{H}^{n-1}\\
			&+\frac{4\log r-1}{r\log r(2\log r-1)}\int_{ \partial B_1}u_r^2 d\mathcal{H}^{n-1}-\frac{1-4\log r}{r\log r(1-2\log r)}\alpha(r)I(r)\\
			&-\frac{2\alpha(r)}{(n+2)r}\int_{\partial B_1}\frac{\mathcal{C}_r}{1-2\log r}d\mathcal{H}^{n-1}-\overline{Q}(r;u,x^0)\\
			=&\frac{n+2}{r}\left(M(r;\mathcal{C}_r)-M(r;u_r)\right)+\frac{\alpha(r)}{2r}\int_{\partial B_1}\left(\nabla u_r\cdot x -\frac{2}{\alpha(r)}u_r\right)^2d\mathcal{H}^{n-1}\\
			&+\frac{1-4\log r}{r(-\log r)(1-2\log r)}\left(\alpha(r)I(r)-\int_{ \partial B_1}u_r^2 d\mathcal{H}^{n-1}  \right)\\
			&-\frac{2\alpha(r)}{(n+2)r}\int_{\partial B_1}\frac{\mathcal{C}_r}{1-2\log r}d\mathcal{H}^{n-1}-\overline{Q}(r;u,x^0).
			\end{align*}
			Recalling that $W(r;u,x^0)=\alpha(r)I(r)-\int_{ \partial B_1}u_r^2 d\mathcal{H}^{n-1}$, it follows that
			\begin{align*}
				\mathscr{E}'(r)=&\frac{n+2}{r}\left(M(r;\mathcal{C}_r)-M(r;u_r)\right)+\frac{\alpha(r)}{2r}\int_{\partial B_1}\left(\nabla u_r\cdot x -\frac{2}{\alpha(r)}u_r\right)^2d\mathcal{H}^{n-1}\\
				&+\frac{1-4\log r}{r(-\log r)(1-2\log r)} W(r;u,x^0) -\frac{2\alpha(r)}{(n+2)r}\int_{\partial B_1}\frac{\mathcal{C}_r}{1-2\log r}d\mathcal{H}^{n-1}-\overline{Q}(r;u,x^0).
			\end{align*}
			
		We claim that \begin{align}\label{claim}
		\frac{1-4\log r}{r(-\log r)(1-2\log r)} W(r;u,x^0) -\frac{2\alpha(r)}{(n+2)r}\int_{\partial B_1}\frac{\mathcal{C}_r}{1-2\log r}d\mathcal{H}^{n-1}-\overline{Q}(r;u,x^0)\geq 0,
		\end{align}
		for small $r$. In fact, 
		\begin{align}\label{Psi}
				\Psi(r;u,x^0):=&\frac{1-4\log r}{r(-\log r)(1-2\log r)} W(r;u,x^0) -\frac{2\alpha(r)}{(n+2)r}\int_{\partial B_1}\frac{\mathcal{C}_r}{1-2\log r}d\mathcal{H}^{n-1}-\overline{Q}(r;u,x^0)\nonumber\\
			=&\frac{1-4\log r}{r(-\log r)(1-2\log r)}\left(W(r;u,x^0)-\frac{2\alpha(r)\log r }{(4\log r -1)(n+2)}\int_{\partial B_1} u_rd\mathcal{H}^{n-1}\right)\nonumber\\
			&-\overline{Q}(r;u,x^0).
		\end{align}
		Here we focus on $$\Phi(r;u,x^0):=W(r;u,x^0)-\frac{2\alpha(r)\log r }{(4\log r -1)(n+2)}\int_{\partial B_1} u_rd\mathcal{H}^{n-1}\to 0, \qquad\text{as}\quad r\to 0+.$$
In fact, on the one hand, recalling that Lemma \ref{property} tells us the limit $\lim_{r\to 0+}W(r;u,x^0)$ exists and equals $\int_{ B_1}\frac{1}{2} u_0 dx$, thus we can obtain the limit
   
   $$\displaystyle\lim_{r\to 0+}\Phi(r;u,x^0)=\int_{ B_1}\frac{1}{2} u_0(x) dx-\frac{1}{2(n+2)}\int_{\partial B_1}u_0(x)d\mathcal{H}^{n-1}.$$
  On the other hand, the 2-homogeneity of $u_0$ implies that $$\displaystyle\frac{1}{n+2}\int_{\partial B_1}u_0(x)d\mathcal{H}^{n-1}=\int_{ B_1} u_0 (x)dx.$$
  Therefore,  it follows that $$\displaystyle\lim_{r\to 0+}\Phi(r;u,x^0)=0.$$
   Furthermore, we compute that
   	\begin{align}	\label{eq4.8}
   		\frac{d}{dr} \Phi(r;u,x^0)=&\frac{\alpha (r)}{r}\int_{\partial B_1} \left(\nabla u_r\cdot x -\frac{2}{\alpha(r)}u_r\right)^2d\mathcal{H}^{n-1}-\frac{2}{(n+2)r(4\log r -1)^2}\int_{\partial B_1} u_rd\mathcal{H}^{n-1}\nonumber\\
   		&-\frac{2\log r -1}{(n+2)(4\log r -1)}\int_{\partial B_1}\frac{1}{r}\left(\nabla u_r\cdot x-\frac{2}{\alpha(r)}u_r\right)d\mathcal{H}^{n-1}+Q(r;u,x^0)\nonumber\\
   		=&\frac{\alpha (r)}{r}\int_{\partial B_1} \left(\nabla u_r\cdot x -\frac{2}{\alpha(r)}u_r\right)^2d\mathcal{H}^{n-1}+P(r;u.x^0),
   	\end{align}
   where 
   \begin{align*}
   	P(r;u.x^0):=&Q(r;u,x^0)-\frac{2}{(n+2)r(4\log r -1)^2}\int_{\partial B_1} u_rd\mathcal{H}^{n-1}\\
   	&-\frac{2\log r -1}{(n+2)(4\log r -1)}\int_{\partial B_1}\frac{1}{r}\left(\nabla u_r\cdot x-\frac{2}{\alpha(r)}u_r\right)d\mathcal{H}^{n-1}.
   \end{align*}
Noting that $P(r;u,x^0)$ is an integrable function. Indeed, the foundation \eqref{order-q} indicates that for each $\delta > 0$,
 \begin{align*}
 	\left|\int_{0}^{r}\frac{1}{r}\int_{\partial B_1}\nabla u_r\cdot x-\frac{2}{\alpha(r)}u_rd\mathcal{H}^{n-1}dr\right|\leq\delta,
 \end{align*}
 provided that we first choose $r>0$ for fixed $x^0$ and then $|x-x^0|$ small enough.
	Then, one can see that
			\begin{equation*}
			\left(\Phi(r;u,x^0)-\int_{0}^{r}P(s;u,x^0)ds\right)'\geq 0,
		\end{equation*}
		and then the monotonicity implies that 
		\begin{equation*}
			\Phi(r;u,x^0)-\int_{0}^{r}P(s;u,x^0)ds\geq 	\displaystyle\lim_{r\to 0+}\Phi(r;u,x^0)=0. 
		\end{equation*}
It's nothing but
		\begin{equation*}
				\Phi(r;u,x^0)\geq \int_{0}^{r}P(s;u,x^0)ds.
		\end{equation*}
		Inserting it into \eqref{Psi}, we derive that
		\begin{equation*}
			\begin{aligned}
				\Psi(r;u,x^0)\geq \frac{1}{r}\frac{1-4\log r}{(-\log r)(1-2\log r)}\int_{0}^{r}P(s;u,x^0)ds-\overline{Q}(r;u,x^0).
			\end{aligned}
		\end{equation*}
	This is also a crucial aspect of our approach, focusing on the leading term, which is the term with the lowest order. Observe that $-\overline{Q}(r;u,x^0)\geq 0$  for sufficiently small $r$. On the other hand,  $\overline{Q}(r;u,x^0)$ is the leading term of the expression
		$$\frac{1}{r}\frac{1-4\log r}{(-\log r)(1-2\log r)}\int_{0}^{r}P(s;u,x^0)ds-\overline{Q}(r;u,x^0).$$
		It follows that $$\displaystyle\frac{1}{r}\frac{1-4\log r}{(-\log r)(1-2\log r)}\int_{0}^{r}P(s;u,x^0)ds-\overline{Q}(r;u,x^0)\geq 0,$$
		for sufficiently small $r$.  Therefore, the claim \eqref{claim} holds.

		Next, it together with the epiperimetric inequality implies the following linear differential inequality,
		\begin{align*}
			\frac{d}{dr}\mathscr{E}(r)\geq \frac{c}{r}\mathscr{E}(r) \quad\text{for any} \quad r\in(0,r_0),
		\end{align*}
		which gives a H\"older decay of $\mathscr{E}(r)$. Indeed, it can be concluded from the above claim \eqref{claim} that
				\begin{align}\label{eq4.9}
			\mathscr{E}'(r)\geq&\frac{n+2}{r}\left(M(r;\mathcal{C}_r)-M(r;u_r)\right)+\frac{\alpha(r)}{r}\int_{\partial B_1}\left(\nabla u_r\cdot x -\frac{2}{\alpha(r)}u_r\right)^2d\mathcal{H}^{n-1}\nonumber\\
			\geq&\frac{n+2}{r}\left(M(r;\mathcal{C}_r)-M(r;u_r)\right).	
			\end{align}

		 Furthermore, 
		recalling the epiperimetric inequality $$M(r;\mathcal{C}_r)\geq \frac{M(r;v)-\eta  W(0^+;u,x^0) -\eta \int_{0}^{r}\overline{Q}(s;u,x^0)ds}{1-\eta},$$ for any $r\in(0,r_0)$ and some ${v}\in W^{1,2}(B_1)$ with ${v}=\mathcal{C}_r$ on $\partial B_1$. Therefore, incorporating this result into \eqref{eq4.9}, we demonstrate that
 
	\begin{align}\label{eq4.10}
			\mathscr{E}'(r)
			\geq&\frac{n+2}{r}\left(\frac{M(r;u_r)-\eta  W(0^+;u,x^0) -\eta \int_{0}^{r}\overline{Q}(s;u,x^0)ds}{1-\eta}-M(r;u_r)\right)\nonumber\\
			=&\frac{(n+2)\eta}{(1-\eta)r}\left(\frac{M(r;u_r)-\eta  W(0^+;u,x^0) -\eta \int_{0}^{r}\overline{Q}(s;u,x^0)ds}{1-\eta}-M(r;u_r)\right)\nonumber\\
			=&\frac{(n+2)\eta}{1-\eta}\frac{\mathscr{E}(r)}{r}, \quad\text{for any} \quad r\in(0,r_0).
		\end{align}
		Here, we have used the fact that ${u}$ is the minimizer of the problem	\eqref{eq1.0} with $u_r =\mathcal{C}_r$ on $\partial B_1$. According to \eqref{eq4.10} and taking $\delta=\frac{\eta}{1-\eta}(n+2)$, one gets
		\begin{equation*}
			(r^{-\delta}\mathscr{E}(r))'\geq 0,  \quad\text{for any} \quad r\in(0,r_0),
		\end{equation*}
		 then, integrating the inequality from $r$ to $r_0$, from which we get that
		 \begin{equation}\label{eq4.11}
	\mathscr{E}(r)\leq \mathscr{E}(r_0)\left(\frac{r}{r_0}\right)^{\delta}, \quad\text{for any} \quad r\in(0,r_0).
	 \end{equation}
		
		Thanks to the decay of energy $\mathscr{E}(r)$ at hand, we can further obtain uniqueness of blow-up limit. Firstly, for $0<\rho<\sigma\leq r_0$,
\begin{align*}
	&\int_{\partial B_1} |u_{\sigma}-u_{\rho}|d\mathcal{H}^{n-1}\\
	\leq&\int_{\partial B_1}\int_\rho^\sigma\left|\frac{d u_r}{dr}\right|drd\mathcal{H}^{n-1}\\
	=&\int_\rho^\sigma \frac{ 1}{r^{n+1}(1-2\log r)}\int_{\partial B_r(x^0)}\left|\nabla u\cdot\nu-\frac{2}{\alpha(r)}\frac{ {u}}{r}\right|d\mathcal{H}^{n-1} dr.
\end{align*}
		By observing that the right-hand side of the above inequality resembles the square term in the monotonicity formula, we thus obtain the part we need through H\"older's inequality and matching, 
		\begin{align*}
			 	&\int_{\partial B_1} |u_{\sigma}-u_{\rho}|d\mathcal{H}^{n-1}\\
			 	\leq& \int_\rho^\sigma r^{-n-1}(1-2\log r)^{-1} \left(\int_{ \partial B_1} 1^2 d\mathcal{H}^{n-1}\right)^{\frac{1}{2}}\left(\int_{\partial B_r(x^0)}\left|\nabla u\cdot\nu-\frac{2}{\alpha(r)}\frac{ {u}}{r}\right|^2d\mathcal{H}^{n-1} \right)^{\frac{1}{2}}\\
			 	\leq&\sqrt{n\omega_n}\int_\rho^\sigma r^{-n-1}(1-2\log r)^{-1}r^{\frac{n-1}{2}}\left(\frac{\alpha(r)}{r^{n+2}(1-2\log r)^2}\right)^{\frac{1}{2}}\left(K(r;u,x^0)\right)^{\frac{1}{2}}dr,
		\end{align*}
		where $$K(r;u,x^0)=\displaystyle\frac{\alpha(r)}{r^{n+2}(1-2\log r)}\int_{\partial B_r(x^0)}\left|\nabla u\cdot\nu-\frac{2}{\alpha(r)}\frac{ {u}}{r}\right|^2d\mathcal{H}^{n-1}.$$
		Substituting this into the monotonicity formula, we obtain that

			\begin{align}\label{eq4.12}
				&\int_{\partial B_1} |u_{\sigma}-u_{\rho}|d\mathcal{H}^{n-1}\nonumber\\
				\leq&\sqrt{n\omega_n}\int_\rho^\sigma r^{-\frac{1}{2}}\sqrt{W'(r;u,x^0)-Q(r;u,x^0)}dr\nonumber\\
				\leq&\sqrt{n\omega_n}\left(\log \sigma-\log \rho\right)^{\frac{1}{2}}\left(W(\sigma;u,x^0)-W(\rho;u,x^0)-\int_{\rho}^{\sigma}Q(r;u,x^0)dr\right)^{\frac{1}{2}}\nonumber\\
				=&\sqrt{n\omega_n}\left(\log \sigma-\log \rho\right)^{\frac{1}{2}}\left(W(\sigma;u,x^0)-W(\rho;u,x^0)-\int_{\rho}^{\sigma} \overline{Q}(r;u,x^0)dr+\int_{\rho}^{\sigma}\displaystyle\frac{1}{r(1-2\log r)^{1+\gamma}}dr\right)^{\frac{1}{2}}\nonumber\\
				\leq&\sqrt{n\omega_n}\left(\log \sigma-\log \rho\right)^{\frac{1}{2}}\left(\overline{W}(\sigma;u,x^0)-\overline{W}(\rho;u,x^0)\right)^{\frac{1}{2}}\nonumber\\
				=&\sqrt{n\omega_n}\left(\log \sigma-\log \rho\right)^{\frac{1}{2}}\left(\mathscr{E}(\sigma)-\mathscr{E}(\rho)\right)^{\frac{1}{2}},
			\end{align}
				where $\gamma\in(0,1)$ as mentioned in \eqref{Q}.	
		
		 For any $0<2\rho<2r\leq r_0\ll 1$, there exist two positive integers $l<m$ such that $\rho\in [2^{-m-1}, 2^{-m})$ and $r \in [2^{-l-1}, 2^{-l})$. Hence recalling that \eqref{eq4.11} and \eqref{eq4.12}, we obtain
	
			\begin{align*}
				&\int_{\partial B_1}\left|{\frac{u(x^0+r x)}{r^2(1-2\log r)}-\frac{u(x^0+\rho x)}{\rho^2(1-2\log \rho)}}\right|d\mathcal{H}^{n-1}\\
				\leq& \sum_{j=l}^m \int_{\partial B_1}\int_{2^{-j-1}}^{2^{-j}}\left|\frac{d u_r}{dr}\right|drd\mathcal{H}^{n-1}\\
				\leq&\sqrt{n\omega_n}  \sum_{j=l}^m\left(\log (2^{-j})-\log(2^{-j-1})\right)^{\frac{1}{2}} \left(\mathscr{E}(2^{-j})-\mathscr{E}(2^{-j-1})\right)^{\frac{1}{2}}\\
				\leq&\sqrt{\log 2 n \omega_n}\sum_{j=l}^m\left(\mathscr{E}(2^{-j})\right)^{\frac{1}{2}}\\
				\leq&C_1(n)\sum_{j=l}^m\left(\mathscr{E}(r_0)\left(\frac{2^{-j}}{r_0}\right)^{\delta}\right)^{\frac{1}{2}}\\
				=&C_1(n)\left|\overline{W}(r_0;u,x^0)-\overline{W}(0+;u,x^0)\right|^{\frac{1}{2}}\sum_{j=l}^{+\infty}\left(r_0 2^j\right)^{-\frac{\delta}{2}}\\
				=&C_1(n)\left|\overline{W}(r_0;u,x^0)-\overline{W}(0+;u,x^0)\right|^{\frac{1}{2}} r_0^{-\frac{\delta}{2}}\frac{c^l}{1-c},
			\end{align*}
		where $c=2^{-\frac{\delta}{2}}\in (0,1)$. Therefore, the assumption $r \in [2^{-l-1}, 2^{-l})$ implies that $\displaystyle\frac{c^l}{1-c}<\displaystyle\frac{1}{1-c}r^{\frac{\delta}{2}}$. Furthermore, one can derive that
			\begin{equation*}
				\int_{\partial B_1}\left|{\frac{u(x^0+r x)}{r^2(1-2\log r)}-\frac{u(x^0+\rho x)}{\rho^2(1-2\log \rho)}}\right|d\mathcal{H}^{n-1}\leq 
		  C(n)\left|\overline{W}(r_0;u,x^0)-\overline{W}(0+;u,x^0)\right|^{\frac{1}{2}} \left(\frac{r}{r_0}\right)^{\frac{\delta}{2}},
	  \end{equation*}
	  where $\delta=\frac{(n+2)\eta}{1-\eta}$.
		 Consequently, let $\displaystyle\frac{{u}(x^0+\rho_j x)}{\rho_j^2(1-2\log \rho_j)}\to {u}_0$ as a certain sequence $\rho_j \rightarrow0+$, one can show \eqref{eq4.4}. Thus  we can conclude the proof.
		\qed

\section{Proof of Main Theorem \ref{regularity} of regularity of free boundary}
		In this section, we will present the proof of the main theorem (Theorem \ref{regularity}), which asserts that the free boundary of an open neighborhood around regular free boundary points $\mathcal{R}_{u}$ forms a $C^{1,\beta}$-surface in $\Omega$. For this purpose, we firstly need to verify the assumption (that is, the conditions for the epiperimetric inequality hold) of the blow-up limits decay rate estimate (Proposition \ref{uniqueness}) uniformly in an open neighborhood of a regular free boundary point. 
		
		\begin{lemma}\label{regularity1}
			Let $\mathcal{O}_{\mathbb{H}}$ be a compact set of points $x^0 \in \mathscr{F}(u)$ with the following property, at least one blow-up limit $u_0\in\mathbb{H}$ of $u$ at $x^0$, that is, $u_0(x)=\frac{1}{2}\max(x\cdot\nu(x^0),0)^2\in \mathbb{H}$ for some  $\nu(x^0)\in \partial B_1\subset \mathbb{R}^n$. Then there exist $r_0>0$ and $C<\infty$ such that
			$$
			\int_{\partial B_1}\left|\frac{u(x^0+rx)}{r^2(1-2\log r)}-\frac{1}{2}\max(x\cdot\nu(x^0),0)^2\right| d \mathcal{H}^{n-1}\leq Cr^{\frac{\delta}{2}},
			$$
			for every $x^0\in \mathcal{O}_{\mathbb{H}}$ and every $r\leq r_0<1$, where $\delta=\frac{(n+2)\eta}{1-\eta}$.
		\end{lemma}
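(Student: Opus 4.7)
The plan is to invoke the pointwise decay estimate \eqref{eq4.4} of Proposition \ref{uniqueness} at every $x^0 \in \mathcal{O}_{\mathbb{H}}$ and upgrade the resulting estimate to a uniform one. First I would observe that $\mathcal{O}_{\mathbb{H}} \subset \mathcal{R}_u$ with the constant energy density $W(0+;u,x^0) \equiv \omega_n/2$: since at least one blow-up limit at $x^0$ lies in $\mathbb{H}$, the homogeneity given by Lemma \ref{property}(2) and the computation in \eqref{alp_n} yield $W(0+;u,x^0) = M_0(h_{\nu(x^0)}) = \omega_n/2$; Theorem \ref{isolated} then forces \emph{every} blow-up limit at $x^0$ to be a half-space solution, so that $x^0$ is regular in the sense of Proposition \ref{regular}, and $\nu(x^0)$ is well-defined thanks to uniqueness of the blow-up granted by Proposition \ref{uniqueness} (once its hypothesis is checked).

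Next I would prove the uniform epiperimetric hypothesis $\|\mathcal{C}_r - h_\nu\|_{W^{1,2}(B_1)} \le \delta$ on $\mathcal{O}_{\mathbb{H}}$ for all sufficiently small $r$, by contradiction. If this failed, there would exist $x^0_k \in \mathcal{O}_{\mathbb{H}}$ with $x^0_k \to x^\infty \in \mathcal{O}_{\mathbb{H}}$ and $r_k \to 0^+$ such that
\[
\inf_{h_\nu \in \mathbb{H}} \|\mathcal{C}_{r_k} - h_\nu\|_{W^{1,2}(B_1)} > \delta.
\]
Using the growth estimate of Theorem \ref{lem1.1.} together with the $C^{1,\alpha}_{\text{loc}}$-regularity of $u$, the sequence $u_{r_k}(\cdot;x^0_k)$ is bounded in $W^{1,2}_{\text{loc}}(\mathbb{R}^n)$, so along a subsequence it converges weakly (and, via minimality of $u$, strongly on compact subsets) to a limit $v_0$. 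Arguing as in Lemma \ref{property}(2) and Remark \ref{r2.8}, $v_0$ is a 2-homogeneous global solution of $\Delta v_0 = \chi_{\{v_0>0\}}$. The upper semi-continuity of $x \mapsto W(0+;u,x)$ from Lemma \ref{property}(3), combined with the first step, gives $M_0(v_0) \le \omega_n/2$; the matching lower bound \eqref{eq4.2} then forces $M_0(v_0) = \omega_n/2$, so $v_0 \in \mathbb{H}$ by Theorem \ref{isolated}. Since $\mathcal{C}_{r_k}$ is the 2-homogeneous extension of the trace of $u_{r_k}$ on $\partial B_1$, strong $W^{1,2}(B_1)$-convergence of $u_{r_k}(\cdot;x^0_k)$ to the 2-homogeneous $v_0$ transfers to the extensions, yielding $\mathcal{C}_{r_k} \to v_0$ in $W^{1,2}(B_1)$ and contradicting the assumed gap.

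With this uniform $r_0$, I would apply Proposition \ref{uniqueness} at each $x^0 \in \mathcal{O}_{\mathbb{H}}$. The constant in \eqref{eq4.4} involves $|\overline{W}(r_0;u,x^0)-\overline{W}(0+;u,x^0)|^{1/2}$, which is uniformly bounded on $\mathcal{O}_{\mathbb{H}}$ because $W(0+;u,\cdot) = \omega_n/2$ is constant there, $x^0 \mapsto W(r_0;u,x^0)$ is continuous on the compact set, and the integrable tail $\int_0^{r_0}\overline{Q}(s;u,x^0)\,ds$ is controlled via Theorem \ref{lem1.1.}. Since the unique blow-up limit at each $x^0 \in \mathcal{O}_{\mathbb{H}}$ is precisely $\tfrac{1}{2}\max(x\cdot\nu(x^0),0)^2$, the estimate \eqref{eq4.4} becomes exactly the claim of the lemma with $\delta = (n+2)\eta/(1-\eta)$. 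The main obstacle is the middle paragraph: promoting weak convergence of rescalings at the varying centers $x^0_k$ to strong $W^{1,2}(B_1)$ convergence of the 2-homogeneous extensions $\mathcal{C}_{r_k}$, which relies on minimality of $u$, the non-degeneracy of Theorem \ref{lem1.1.} to prevent concentration of mass on the free boundary, and the isolation of $\mathbb{H}$ among 2-homogeneous global solutions (Theorem \ref{isolated}).
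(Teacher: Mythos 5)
Your proposal is correct and follows essentially the same route as the paper's proof: a compactness-and-contradiction argument showing that rescalings at points of $\mathcal{O}_{\mathbb{H}}$ (with varying centers and radii) converge to elements of $\mathbb{H}$, hence are uniformly close to $\mathbb{H}$ for small $r$, so that the epiperimetric hypothesis of Proposition \ref{uniqueness} holds with a uniform $r_0$; then \eqref{eq4.4} is applied with a uniformly controlled constant. The one point where you diverge is the tool used to pin down the energy of the limit $v_0$ along varying centers $x^0_k\to x^\infty$, $r_k\to 0$: the paper gets $\lim_k \overline{W}(r_k;u,x^0_k)=\frac{\omega_n}{2}$ from Dini's theorem (monotonicity of $r\mapsto\overline{W}(r;u,x)$ plus continuity and the constant limit $\frac{\omega_n}{2}$ on the compact set give uniform convergence), which simultaneously furnishes the uniform bound on $|\overline{W}(r_0;u,x^0)-\overline{W}(0+;u,x^0)|$ used in the final estimate, whereas you invoke the upper semicontinuity statement of Lemma \ref{property}(3); note that, as stated, that lemma concerns $x\mapsto W(0+;u,x)$ at a fixed point and does not by itself bound $M_0(v_0)$ for a joint limit in $(x,r)$ --- what you actually need is the two-scale argument underlying its proof (fix $\rho$, use $\overline{W}(r_k;u,x^0_k)\leq\overline{W}(\rho;u,x^0_k)$, pass $k\to\infty$ by continuity of $x\mapsto\overline{W}(\rho;u,x)$, then let $\rho\to 0$), or simply Dini as in the paper. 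With that substitution your argument closes, and your explicit treatment of transferring strong convergence of $u_{r_k}$ to the homogeneous extensions $\mathcal{C}_{r_k}$ (via the uniform $C^{1,\alpha}$ bounds) is a detail the paper passes over silently in its step (iv).
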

		
		\begin{remark}
			Note that if we have the uniqueness of blow-up limit for the $x^0\in \mathcal{O}_{\mathbb{H}}$ mean that for any compact set $\mathcal{O}_{\mathbb{H}}\subset \mathcal{R}_u$.
		\end{remark}
			
		\begin{proof}
		Initially, we establish the non-emptiness of the compact set $\mathcal{O}_{\mathbb{H}}$. Given that the set $\mathscr{F}(u)$ is known to be non-empty, as defined in Proposition \ref{regular} and supported by Theorem \ref{isolated}. By leveraging the regularity of the solution $u$, it follows that $\mathcal{O}_{\mathbb{H}}$ is a compact set (detailed proof content can be found in \cite[Lemma 7.1]{dz}). Therefore, we conclusively establish that $\mathcal{O}_{\mathbb{H}}$ is not empty.
			The subsequent proof is then divided into four parts.
			
			(i) Firstly, for any $x\in \mathcal{O}_{\mathbb{H}}$, it is easy to see that for any $\epsilon>0$, there exists a $r_0(x, \epsilon)>0$ such that $$\overline{W}(r;u,x)\leq \epsilon +\frac{\omega_n}{2}\quad \text{for any}\quad r\in(0,r_0(x, \epsilon)).$$ Since $\overline{W}(r;u,x)$ is  increasing with respect to $r$, then using Dini's theorem  there exists  a uniform $r_0$ independent of the choice of $x$ such that $$\overline{W}(r;u,x)\leq \epsilon +\frac{\omega_n}{2}\ \ \text{for any $r\in(0,r_0)$ and any $x\in \mathcal{O}_{\mathbb{H}}$.}$$
			
			(ii) Secondly, if $\rho_j\rightarrow 0$, $x^j \in \mathcal{O}_{\mathbb{H}}$ and $u_j:=u(x^j+\rho_jx)/(\rho_j^2(1-2\log \rho_j))\rightarrow v$ in $W^{1,2}_{loc}(\mathbb{R}^n)$ as $j\rightarrow \infty$, then  recalling that the equality \eqref{eq2.5} ($\Delta u_0=\chi_{\{u_0>0\}}$) again, $v$ is a homogeneous global solution of degree 2 to the problem \eqref{eq1.1} and
			\begin{equation*}
				\begin{aligned}
					M_0(v)=&\displaystyle\lim_{j\rightarrow \infty}   \alpha(\rho_j)\int_{B_1}\frac{1}{2}|\nabla u_j(y)|^2+F(\rho_j,u_j(y))dy-\int_{\partial B_1}|u_j(y)|^2d\mathcal{H}^{n-1}\\
					=&\displaystyle\lim_{j\rightarrow \infty} W(\rho_j;u,x^j) =\displaystyle\lim_{j\rightarrow \infty}\overline{ W}(\rho_j;u,x^j) = \frac{\omega_n}{2}.
				\end{aligned}
			\end{equation*}
			 Due to the characterization of energy density Theorem \ref{isolated}, $v\in \mathbb{H}.$
			
			(iii) We claim that for any $\rho>0$ small enough, $\displaystyle\frac{u(\bar{x}+\rho x)}{\rho^2(1-2 \log \rho)}$ is uniformly close to $\mathbb{H}$ in the $W^{1,2}_{loc}(\mathbb{R}^n)$-topology for any $\bar{x}\in \mathcal{O}_{\mathbb{H}}$.
			
			To verify this claim, one may use the argument by contradiction. Assume this claim fails, then there exist  $\rho_j\rightarrow 0$ and $x^j\in \mathcal{O}_{\mathbb{H}}$ such that for any $h_{\nu}\in \mathbb{H}$, there holds
			\begin{equation}\label{eq5.1}
				\left\|\frac{u(x^j+\rho_j x)}{\rho_j^2 (1-2\log \rho_j)}-h_{\nu}\right\|_{W^{1,2}_{B_1}(\mathbb{R}^n)}\geq \delta >0.
			\end{equation} Owing to the growth estimate \cite[Lemma3.10, Theorem 3.11]{qs17}, we obtain that $|u_j|+|\nabla u_j|\leq C$, then $\|u_j\|_{L^{\infty}_{loc}(\mathbb{R}^n)}\leq C$. Thus
			$\|u_j\|_{W^{1,2}_{loc}(\mathbb{R}^n)}\leq C$. Hence, there is a convergent subsequence, still denoted by $u_j$,  such that $u_j\rightarrow u_0$ in $W^{1,2}_{loc}(\mathbb{R}^n)$.  Thanks  to the statement (ii), we conclude that $w\in \mathbb{H}$ ,  which contradicts with the fact \eqref{eq5.1}.
			
			(iv) Finally, we conclude the proof of Lemma \ref{regularity1}.
			
			In fact, the claim of (iii) implies that  the assumptions  in Proposition \ref{uniqueness} hold true and thus there is a constant  $C=C(n,r_0,\eta)>0$ such that
			\begin{equation*}
				\begin{aligned}
						\int_{\partial B_1}\left|\frac{u(x^0+rx)}{r^2(1-2\log r)}-u_0(x)\right| d\mathcal{H}^{n-1}&\leq C(n) \left|\overline{W}(r_0;u,x^0)-\overline{W}(0+;u,x^0) \right|^{\frac{1}{2}}\left(\frac{r}{r_0}\right)^{\frac{(n+2)\eta}{2(1-\eta)}}\\
						&\leq C(n)\epsilon^{\frac{1}{2}}
						\left(\frac{r}{r_0}\right)^{\frac{(n+2)\eta}{2(1-\eta)}}\\
					&\leq C(n,r_0,\eta)r^{\frac{\delta}{2}},
				\end{aligned}
			\end{equation*}
			for any ${x}^0\in \mathcal{O}_{\mathbb{H}}$ and $r\in(0, r_0)$. Here $u_0$ is the unique blow-up limit of $u$ at $x^0$ and $u_0\in \mathbb{H}$.
		\end{proof}

		\vspace{5pt}
		In the following, we use the Lemma \ref{regularity1} to prove the main result.
		
		\noindent{\it Proof of Theorem \ref{regularity}.}
		Consider $x^0\in \mathcal{R}_{u}\subset \mathscr{F}(u)$, by Lemma \ref{regularity1}, there exists a $\delta_0>0$ such that $B_{2\delta_0}(x^0)\subset \Omega$ and
		\begin{equation}\label{eq5.2}
			\int_{\partial B_1}\left|\frac{u(y+rx)}{r^2(1-2\log r)}-\frac{1}{2}\max(x\cdot\nu(y),0)^2 \right| d\mathcal{H}_x^{n-1}\leq C(n,r_0,\eta)r^{\frac{\delta}{2}},
		\end{equation}
		for every $y\in \mathcal{R}_{u}\cap \overline{B_{\delta_0}(x^0)} $ and every $r\leq\min(\delta_0,r_0)\leq r_0<1$.
		
		In the following, we split the proof into the establishment of several claims.
		
		\textbf{Claim 1}. $y\longmapsto\nu(y)$ is H\"older-continuous with exponent $\beta$ on $\mathcal{R}_u\cap \overline{B_{\delta_1}(x^0)}$ for some $\delta_1\in(0,\delta_0)$.
		
		\noindent{\it Proof of Claim 1}. In fact, for any $y, z \in \mathcal{R}_u\cap \overline{B_{\delta_1}(x^0)}$, the Cauchy inequality leads to that
	 
			\begin{align*}
				&\int_{\partial B_1}\left|\frac{1}{2}\max(x\cdot\nu(y),0)^2-\frac{1}{2}\max(x\cdot\nu(z),0)^2\right|d\mathcal{H}_x^{n-1}\\
				=&\int_{\partial B_1}\Bigg{|}\left(\frac{1}{2}\max(x\cdot\nu(y),0)^2-\frac{u(y+rx)}{r^2(1-2\log r)}\right)+\left(\frac{u(y+rx)}{r^2(1-2\log r)}-\frac{u(z+rx)}{r^2(1-2\log r)} \right) \\
				&+\left(\frac{u(z+rx)}{r^2(1-2\log r)}  -\frac{1}{2}\max(x\cdot\nu(z),0)^2\right)\Bigg{|}d\mathcal{H}_x^{n-1}\\
				\leq& 2Cr^{\frac{\delta}{2}} +\int_{\partial B_1}\left|\frac{u(y+rx)}{r^2(1-2\log r)}-\frac{u(z+rx)}{r^2(1-2\log r)} \right|d\mathcal{H}_x^{n-1}\\
				\leq&2Cr^{\frac{\delta}{2}} +\int_{\partial B_1}\int_{0}^{1}\left| \frac{\nabla u \left((y+rx +t(y-z)\right)}{r^2(1-2\log r)}\right||y-z|dtd\mathcal{H}_x^{n-1}.
			\end{align*}
				
			Based on this, we revisit the optimal Log-Lipschitz regularity for the gradient of a minimizer of \eqref{eq1.0} established by de Queiroz and Shahgholian in \cite[Theorem 3.11]{qs17}, namely, let $u$ be a minimizer of \eqref{eq1.0} and $\Omega'\subset\subset \Omega$, there exist $r_0>0$ and a constant $C>0$ depending both only on $\text{dist}(\Omega',\partial \Omega)$, $\varphi$ and $n$ such that, if $x\in \Omega'$ with $d(x)=\text{dist}(x,\partial \{u>0\})\leq r_0$, then
			 \begin{equation*}
			 	|\nabla u(x)|\leq Cd(x)\log \frac{1}{d(x)}.
			 \end{equation*}
				Therefore, for small enough $r_0$,  we can derive that 
						
				\begin{align*}
						&\int_{\partial B_1}\left|\frac{1}{2}\max(x\cdot\nu(y),0)^2-\frac{1}{2}\max(x\cdot\nu(z),0)^2\right|d\mathcal{H}_x^{n-1}\\
						\leq&2Cr^{\frac{\delta}{2}}\int_{\partial B_1}\int_{0}^{1} \frac{Cd(x)\log \frac{1}{d(x)}}{r^2(1-2\log r)}|y-z|dtd\mathcal{H}_x^{n-1}\\
						\leq&2Cr^{\frac{\delta}{2}}\int_{\partial B_1}\int_{0}^{1} \frac{C\max(r;|y-z|)\log \frac{1}{\max(r;|y-z|)}}{r^2(1-2\log r)}|y-z|dtd\mathcal{H}_x^{n-1}\\
				\leq&2Cr^{\frac{\delta}{2}}+C_1r^{-1}|y-z|\\
				=&2Cr^{\frac{\delta}{2}}+C_2|y-z|^{1-\gamma}\\
				\leq&(2C+C_2)|y-z|^{\gamma {\frac{\delta}{2}}},
			\end{align*}
		where we choose $0<\gamma:=\left(1+\frac{\delta}{2}\right)^{-1}<1$ and $r:=|y-z|^{\gamma}\leq \min (\delta_0,r_0)\leq r_0$. Indeed, let
		\begin{equation*}
			\gamma \frac{\delta}{2}=1-\gamma,
		\end{equation*}
		that is $\gamma=\left(1+\frac{\delta}{2}\right)^{-1}$.
		For convenience, we denote $\beta:=	\gamma \frac{\delta}{2}=1-\gamma=\frac{\delta}{2+\delta}$, where $\delta=\frac{(n+2)\eta}{1-\eta}>0$.
		
	   On the other hand, through an indirect argument, it can be shown that the left side of the above inequality is satisfied
			\begin{align*}
				\int_{\partial B_1}\left|\frac{1}{2}\max(x\cdot\nu(y),0)^2-\frac{1}{2}\max(x\cdot\nu(z),0)^2\right|d\mathcal{H}^{n-1}\nonumber
				\geq c(n)\left(\left|\nu(y)-\nu(z)\right|\right).
			\end{align*}
 
		Therefore, we finish the proof of Claim 1.
		
		\textbf{Claim 2}. $\forall \ \epsilon>0$, there exists a $\delta_2\in (0,\delta_1)$ such that for any  $z\in \mathcal{R}_{u}\cap \overline{B_{\delta_0}(x^0)}$,  we obtain
		
		\begin{equation*}
			\begin{split}
				|u(y)|&=0 \quad \text{for}\quad y\in  \overline{B_{\delta_2}(y)} \quad \text{satisfying}\quad (y-z)\cdot \nu(z)<-\epsilon|y-z|, \\
				\text{and}\qquad\qquad&\\
				|u(y)|&>0 \quad \text{for}\quad y\in  \overline{B_{\delta_2}(y)} \quad \text{satisfying}\quad (y-z)\cdot \nu(z)>\epsilon|y-z|.
			\end{split}
		\end{equation*}
		
		\noindent{\it Proof of Claim 2}. Based on the inequality \eqref{eq5.2}, we can draw a conclusion using the method of contradiction.
		
		\textbf{Claim 3}. There exists a $\delta_3\in(0,\delta_2)$ such that $\partial \{ |u|>0\}$ is in $B_{\delta_3}(x^0)$ the graph of a differentiable function.
		
		\noindent{\it Proof of Claim 3}. Based on Claim 1 and Claim 2, we can directly prove this assertion. Notice that from Claim 1, it follows that there exists a uniform constant $C>0$ such that $|\nu(x)-\nu(y)|\leq  C|x-y|^{\beta},\quad \beta:=\frac{\delta}{2+\delta}\in(0,1),\quad\delta=\frac{(n+2)\eta}{1-\eta}>0$ for any $x, y\in  \overline{B_{\delta_3}(x^0)}$, which concludes $\partial \{ u>0\}$ is the graph of a  $C^{1, \beta}(\overline{B'_{\delta_3}(0)})$ function on $\overline{B_{\delta_3}(x^0)}$.

	\end{document}